\newcommand\norma[1]{\left\lVert#1\right\rVert}
\newcommand\abs[1]{\left\lvert#1\right\rvert}
\newcommand{\Z}{\mathbb{Z}}
\newcommand{\R}{\mathbb{R}}
\DeclareMathOperator{\ui}{\mathrm{i}} 
\DeclareMathOperator{\uj}{\mathrm{j}}
\DeclareMathOperator{\uk}{\mathrm{k}}
\renewcommand{\epsilon}{\varepsilon}
\renewcommand{\theta}{\vartheta}
\renewcommand{\phi}{\varphi}
\theoremstyle{plain}
\newtheorem{teor}{Theorem}
\newtheorem{prop}[teor]{Proposition}
\newtheorem{lem}[teor]{Lemma}
\newtheorem{cor}[teor]{Corollary}
\theoremstyle{remark}
\newtheorem*{oss}{Remark}
\theoremstyle{definition}
\title[HKT Calabi-Yau equation on tori fibrations]{The quaternionic Calabi conjecture on Abelian hypercomplex nilmanifolds viewed as tori fibrations}
\begin{document}
 
\thanks{This work was supported by GNSAGA of INdAM}
\address{Dipartimento di Matematica G. Peano \\ Universit\`a di Torino\\
Via Carlo Alberto 10\\
10123 Torino\\ Italy}
\email{giovanni.gentili@unito.it} \email{luigi.vezzoni@unito.it}

\author{Giovanni Gentili and Luigi Vezzoni}
\date{\today}

\maketitle
\begin{abstract}
We study the quaternionic Calabi-Yau problem in HKT geometry, introduced by  Alesker and Verbitsky in \cite{Alesker-Verbitsky (2010)}, on $8$-dimensional $2$-step nilmanifolds $M$ with an Abelian hypercomplex structure. We show that on these manifolds the quaternionic Monge-Amp\`ere equation can always be solved for any data that is invariant under the action of a $3$-torus.
\end{abstract}

\section{Introduction}

Since Yau proved the Calabi-Yau conjecture in \cite{Y}, other Calabi-Yau-type problems have been introduced in various geometric contexts. 

In the present paper we focus on a generalization of the Calabi-Yau problem to HKT geometry, which was introduced by  Alesker and Verbitsky in \cite{Alesker-Verbitsky (2010)}. 

{\em HyperK\"ahler} manifolds {\em with torsion} (HKT) were introduced by Howe and Papadopoulos in \cite{Howe-Papadopoulos (2000)} in the framework known as `geometries with torsion'. In a nutshell, they can be thought of as hypercomplex manifolds admitting a special compatible Riemannian metric. 

A {\em hypercomplex manifold} is a $4n$-dimensional real manifold $M$ equipped with a  triple of complex structures  $ J_1,J_2,J_3$ satisfying the quaternionic relations
\begin{equation}\label{HC}
J_1J_2=-J_2J_1=J_3\,.
\end{equation} 

If $g$ is a Riemannian metric on $M$ that is compatible with $J_1,J_2,J_3$, then  
$(M,J_1,J_2,J_3,g)$ is usually called a \emph{hyperHermitian} manifold. According to the classical definition, a hyperHermitian manifold $(M,J_1,J_2,J_3,g)$ is said HKT  if there exists an affine connection $\nabla$  on $M$ which preserves the hyperHermitian structure and has totally skew-symmetric torsion. If such $\nabla$ exists, it is necessarily unique.  The existence of $\nabla$ can be characterized in terms of the differential equation 
$$
\partial \Omega=0\,,
$$
where $\partial$ is taken with respect to $J_1$\,,
\[
\Omega=\omega_{J_2}+\ui \omega_{J_3}\,,
\]
and 
$$
\omega_{J_r}(\cdot,\cdot)=g(J_r\cdot,\cdot)\,.
$$

In this context $\Omega$ is called the {\em HKT form} of the HKT structure, and one may think of it as the analogue of the fundamental form in K\"ahler geometry.   
The hypercomplex condition \eqref{HC} implies that $\Omega$
is of type $(2,0)$ with respect to $J_1$, and it satisfies
\begin{equation*}
\Omega(J_2\cdot,J_2\cdot)=\bar\Omega
\end{equation*}  
and 
\begin{equation*}
\Omega(X,J_2X)>0\,\quad\mbox{for every nowhere vanishing real vector field $X$ on $M$.}
\end{equation*}  
Moreover, $\Omega$ determines the metric $g$ via the relation 
$$
g(X,Y)={\rm Re}\,\Omega(X,J_2Y)\,,  \,\quad\mbox{for any real vector fields $X,Y$ on $M$.}
$$
An HKT structure can then be defined alternatively, as a hypercomplex structure together with an HKT form. 

\medskip 
In \cite{Alesker-Verbitsky (2010)} the authors introduced the following Calabi-Yau-type problem in HKT geometry.  
Let  $(M,J_1,J_2,J_3,\Omega)$ be a compact $4n$-dimensional HKT manifold for which the canonical bundle of $(M,J_1)$ is holomorphically trivial, and suppose $F\in C^{\infty}(M)$ is a function satisfying 
\begin{equation}\label{normalization}
\int_M (\mathrm{e}^{F}-1)\, \Omega^n \wedge \bar{\Theta}=0\,,
\end{equation}
where $\Theta$ is a non-vanishing holomorphic $(2n,0)$-form on $(M,J_1)$.
The {\em quaternionic Calabi-Yau problem} consists in finding an HKT form $\tilde \Omega$ on $ (M,J_1,J_2,J_3)$ such that  
\begin{equation}\label{CY}
\tilde{\Omega}^n=\mathrm{e}^F\Omega^n \,. 
\end{equation}
Just like the classical version, the quaternionic Calabi-Yau problem, too, can be rewritten in the form of a Monge-Amp\`ere equation. Indeed, results in \cite{BS} guarantee the unknown HKT form $\tilde\Omega$ can be written in terms of an HKT potential   $\phi \in C^\infty(M)$ as follows
$$
\tilde \Omega=\Omega+\partial\partial_{J_2}\varphi\,.
$$ 
Here $ \partial_{J_2} $ is the so-called \emph{twisted Dolbeault operator}
\[
\partial_{J_2}=-J^{-1}_2\bar{\partial}J_2
\]
and the complex structure $ J_2 $ acts on $ k $-forms $ \alpha $ by
\[
J_2\alpha(X_1,\dots,X_k)=(-1)^k\alpha(J_2X_1,\dots,J_2X_k)\,.
\]

Equation \eqref{CY} reads, in terms of $\phi$ and  $F$, 
\begin{equation}
\label{eq_quaternionic_Calabi_conjecture}
(\Omega+\partial \partial_{J_2}\phi)^n=\mathrm{e}^F\Omega^n\,.
\end{equation}

It has been conjectured in \cite{Alesker-Verbitsky (2010)} that the above equation 
can always be solved under assumption \eqref{normalization}. The authors of the same paper propose the continuity method as a natural approach to attack the problem, much in the same spirit of Yau's proof of the Calabi conjecture \cite{Y}. The hard part in this line of thought is to establish a priori estimates. Alesker and Verbitsky \cite{Alesker-Verbitsky (2010)} showed the solution is unique up to an additive constant, and proved a $ C^0 $-estimate. The latter was later generalized by Alesker-Shelukhin \cite{Alesker-Shelukhin (2017)}, and then by Sroka \cite{Sroka} in a more general setting. Alesker gave evidence for believing the conjecture in \cite{Alesker (2013)}, where he proved that the quaternionic Monge-Amp\`ere equation has solutions if the manifold admits a flat hyperK\"ahler metric compatible with the underlying hypercomplex structure. 

\medskip 
The work of the present paper takes off where \cite{BFV,Buzano-Fino-Vezzoni (2015),FLSV,TW,TW2,V} stopped. Those articles studied the symplectic Calabi-Yau conjecture \cite{D,W} on torus fibrations in the case the problem's data admits certain symmetries. In the same spirit, we study the quaternionic Monge-Amp\`ere equation on compact quotients of  $8$-dimensional nilpotent Lie groups endowed with an {\em Abelian} HKT structure.  
  
By a result of Dotti and Fino \cite{Dotti-Fino (2000)} the only non-Abelian $8$-dimensional $2$-step nilpotent Lie groups admitting an Abelian hypercomplex structure are 
\begin{align*}
N_1=H_1(2)\times \R^3\,, && N_2=H_2(1)\times \R^2\,, && N_3=H_3(1)\times \R\,, 
\end{align*}
where $H_{i}(n)$ denotes the real ($i=1$), complex ($i=2$), and quaternionic ($i=3$) Heisenberg group.  Each $N_i$ contains a canonical co-compact lattice $\Gamma_i$,  and the nilmanifold $M_i=\Gamma_i\backslash N_i$, i.e. the quotient of $N_i$ by $\Gamma_i$, inherits the structure of a principal $T^3$-bundle over a $5$-dimensional torus $T^5$ and also an HKT structure $(J_1,J_2,J_3,g)$ (see section \ref{pre} for details). In view of \cite{BG} the nilmanifolds $M_i$ are not K\"ahlerian, since a compact nilmanifold admits a K\"ahler metric if and only if it is a torus.

Moreover, the canonical bundle of $(M_i,J_1)$ is holomorphically trivial \cite[Theorem 2.7]{BDV} and $M_i$ carries a left-invariant holomorphic volume form $\Theta$. 
Hence it is quite natural to wonder whether the Alesker-Verbitsky conjecture might hold on these spaces.  

\medskip 
Our main result is the following 

\begin{teor}\label{main}
The quaternionic Monge-Amp\`ere equation  \eqref{eq_quaternionic_Calabi_conjecture} on $(M_i,J_1,J_2,J_3,g)$ can be solved for every $T^3$-invariant map $F\in C^{\infty}(M_i)$ satisfying \eqref{normalization}.
\end{teor}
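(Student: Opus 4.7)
The natural strategy is the continuity method along the family
\[
(\Omega+\partial\partial_{J_2}\phi_t)^n=\mathrm{e}^{tF+c_t}\,\Omega^n,\qquad t\in[0,1],
\]
where the constants $c_t$ are chosen so that the right-hand side satisfies \eqref{normalization}. The starting point $t=0$ is the trivial solution, openness is a standard application of the implicit function theorem based on the ellipticity of the linearization of the quaternionic Monge--Amp\`ere operator (as in \cite{Alesker-Verbitsky (2010)}), so the whole issue is closedness, i.e.\ the derivation of a priori estimates. Since the datum $F$ is $T^3$-invariant and the solution of \eqref{eq_quaternionic_Calabi_conjecture} is unique up to an additive constant, one may look directly for solutions $\phi_t$ that are themselves $T^3$-invariant; as $M_i\to T^5$ is a principal $T^3$-bundle, such a $\phi_t$ is nothing but a function on the base torus $T^5$. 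This is the structural simplification around which the whole argument is organized, precisely in the spirit of \cite{BFV,Buzano-Fino-Vezzoni (2015),FLSV,TW,TW2,V}.

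The first step would be to rewrite the quaternionic Monge--Amp\`ere operator on the left-invariant HKT structure of $M_i$ in terms of a left-invariant coframe adapted to the $T^3$-fibration and to the Abelian hypercomplex structure. Using the explicit structure equations for $N_i$ and the Abelian condition $[J_rX,J_rY]=[X,Y]$, the operator $\partial\partial_{J_2}$ acting on $T^3$-invariant functions should take a simple and essentially constant-coefficient form, so that the restriction of \eqref{eq_quaternionic_Calabi_conjecture} to $T^3$-invariant potentials becomes a fully nonlinear elliptic equation on $T^5$ depending only on $\phi$ and its first and second derivatives along the base directions. Once this reduction is made explicit, the $C^0$ estimate is already provided by \cite{Alesker-Shelukhin (2017),Sroka}, and it remains only to establish uniform $C^2$ (and then $C^{2,\alpha}$) bounds.

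The $C^2$ estimate is where I expect the real work. The standard HKT strategy is to apply the maximum principle to a quantity of the form $\mathrm{e}^{-A\phi}\,\mathrm{tr}_{\tilde\Omega}\Omega$ (or $\mathrm{e}^{-A\phi}$ times the largest eigenvalue of the quaternionic Hessian), producing an inequality in which the troublesome terms are third-order commutators and torsion contributions coming from the non-K\"ahlerian nature of $\Omega$. On a general HKT manifold these terms obstruct a direct Yau-type argument, but under $T^3$-invariance and Abelian hypercomplex hypothesis many such terms involving derivatives in the fiber directions vanish identically, and the remaining ones should be controllable by the gradient of $\phi$ via $\partial\partial_{J_2}\phi\le\tilde\Omega$. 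A $C^1$ bound (if needed before the $C^2$) should be obtainable either by a similar maximum principle on $|\nabla\phi|^2$, exploiting again the $T^3$-invariance to cancel bad torsion terms, or by a blow-up/Liouville argument on the nilpotent model. The main obstacle is thus to show that the algebra of the Abelian hypercomplex structure of $N_i$, combined with the $T^3$-invariance, produces enough cancellations to close the maximum-principle inequality for the second-order quantity.

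Once a uniform $C^2$ estimate is in hand, concavity of $\log\det$ in the quaternionic Hessian together with Evans--Krylov theory yields $C^{2,\alpha}$ bounds, Schauder estimates give $C^{k,\alpha}$ bounds for all $k$, and the set of admissible $t$ is closed. The continuity method then reaches $t=1$, completing the proof of Theorem~\ref{main}.
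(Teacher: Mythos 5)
Your overall skeleton (reduce by $T^3$-invariance to a scalar equation on the base $T^5$, get $C^0$ from the known uniform estimates, then a maximum-principle second-order bound, then $C^{2,\alpha}$ and Schauder, all inside a continuity method) matches the architecture of the paper, and the minor differences (continuity path $\mathrm{e}^{tF+c_t}$ versus $\log(1-t+t\mathrm{e}^F)$, citing Alesker--Shelukhin/Sroka for $C^0$ rather than reproving it) are harmless. The genuine gap is the second-order estimate, which is exactly where you stop and say the torsion/commutator terms ``should be controllable'' by cancellations coming from the Abelian structure and $T^3$-invariance. That is the entire content of the paper's Sections 3--4 and it is not a routine Yau-type computation: the paper first writes the reduced equation explicitly as $(\phi_{11}+\phi_{22}+\phi_{33}+\phi_{44}+1)(\phi_{55}+1)-\sum_{i=1}^4\phi_{i5}^2=\mathrm{e}^F$, proves a quantitative ellipticity bound with explicit eigenvalues, and then applies the maximum principle to $(\Delta\phi+2)\mathrm{e}^{-\mu\phi}$ (with $\mu$ scaled by $\max(\Delta\phi+2)$ and by $1+\|\nabla\phi\|_{C^0}$), choosing the test covector $\xi_i=\mathrm{sgn}(\phi_{i5})\sqrt{\eta_{ii}}$ built from the Hessian inequality at the maximum point to absorb the mixed terms $\phi_{i5}\Delta\phi_{i5}$. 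This only yields $\|\Delta\phi\|_{C^0}\leq C(1+\|\phi\|_{C^1})$, so a further step is needed: the $C^1$ bound is not obtained by a gradient maximum principle or blow-up, as you suggest, but by combining Morrey's inequality, $L^p$ elliptic estimates for the Laplacian and an interpolation inequality in which the $C^1$ norm enters with an arbitrarily small coefficient, closing the loop. Without carrying out (or replacing) this chain, your proposal is a plan rather than a proof.

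A secondary point: for the $C^{2,\alpha}$ step the paper does not run Evans--Krylov on the HKT manifold; it invokes Alesker's theorem for compact HKT manifolds with locally flat hypercomplex structure (available here because the Obata connection is flat on these nilmanifolds), with the hypotheses supplied by the $C^0$ bounds on $\phi$ and on $\tilde\Delta\phi$. Your concavity remark is salvageable if applied to the reduced scalar equation on $T^5$ (its logarithm is concave in $D^2\phi$ on the elliptic branch, and uniform ellipticity follows from the Laplacian bound), but as stated for the quaternionic Hessian on $M_i$ it is not justified and is anyway unnecessary once Alesker's result is used.
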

 
Since we are assuming $F$ is invariant under the action of the fibre $T^3$, it can be regarded as a smooth function on the base $T^5$. Furthermore, condition \eqref{normalization} can be written as 
\begin{equation}\label{normalization1}
\int_{T^5}(\mathrm{e}^F-1)dx^1\cdots dx^5=0\,.
\end{equation}
By imposing the same invariance property on the HKT potential $\varphi$, we reduce the quaternionic Monge-Amp\`ere equation on $(M_i,J_1,J_2,J_3,g)$ to  
 \begin{equation}\label{eq_equation}
(\phi_{11}+\phi_{22}+\phi_{33}+\phi_{44}+1)(\phi_{55}+1)-\phi_{15}^2-\phi_{25}^2-\phi_{35}^2-\phi_{45}^2=\mathrm{e}^F\,,
\end{equation}
where $\phi_{rs}$ denotes the second derivative of $\phi$ in the real coordinates $x^r,x^s \in \{x^1,\dots,x^5\}$ on $T^5$. Then we prove that equation  \eqref{eq_equation} has a solution $\varphi\in C^{\infty}(T^5)$ whenever $F$ satisfies  \eqref{normalization1}.  

The strategy for proving Theorem \ref{main} goes as follows:  in section  \ref{C^0} we prove the $C^0$-estimate for our equation. Then in section  \ref{Laplacian} we deduce an a priori $C^0$-estimate for the Laplacian of a solution to our equation, and in section  \ref{C^2,alpha} we achieve the $C^{2,\alpha}$-estimate by applying a general result of Alesker \cite{Alesker (2013)}. Eventually, we complete the proof in section \ref{Proof} by applying the continuity method.

\bigskip
\noindent {\bf Acknowledgements.} The authors are very grateful to Ernesto Buzano, Anna Fino, Alberto Raffero and 
 Simon Chiossi  for many useful conversations.

\section{Preliminaries}\label{pre}

Let $G$ be an $8$-dimensional Lie group with a left-invariant hypercomplex structure $(J_1,J_2,J_3)$ (every complex structure $J_i$ is left-invariant).   Assume that   $J_1$ is {\em Abelian}, meaning 
\[
[J_1X,J_1Y]=[X,Y], \quad \mbox{ for every }\,\, X,Y\in \mathfrak{g}\,,
\]
where $\mathfrak{g}$ is the Lie algebra of $G$. Recall that this is equivalent to requiring that the Lie algebra $\mathfrak{g}^{1,0}$ of left-invariant vector fields of type $(1,0)$ on $(G,J_1)$ is Abelian. It also implies that any left-invariant  $(p,0)$-form on $(G,J_1)$ is $\partial$-closed. 
If $g$ is a left-invariant Riemannian metric on $G$  compatible   
with $(J_1,J_2,J_3)$, the hyperHermitian structure $(J_1,J_2,J_3,g)$ is HKT because the corresponding form $\Omega$ is $\partial$-closed. 

\medskip
As we mentioned in the introduction, by
 \cite{Dotti-Fino (2000)} the only  $8$-dimensional nilpotent, non-Abelian, Lie groups carrying a left-invariant HKT structure $(J_1,J_2,J_3,g)$ such that every $J_i$ is Abelian are  
\begin{align*}
N_1=H_1(2)\times \R^3\,, && N_2= H_2(1)\times\R^2\,, && N_3=H_3(1)\times\R\,,
\end{align*} 
where  

\begin{align*}
H_1(2)&=\left\{ \begin{pmatrix}
1 & x^1 & x^4 & y^1\\
0 & 1 & 0 & x^3\\
0 & 0 & 1 & x^2\\
0 & 0 & 0 & 1
\end{pmatrix} \right\},\quad 
H_2(1)=\left\{ \begin{pmatrix}
1 & x^1+\ui x^2 & y^3+\ui y^2 \\
0 & 1 & x^4+\ui x^3\\
0 & 0 & 1
\end{pmatrix} \right\},\\
H_3(1)&=\left\{ \begin{pmatrix}
1 & q & h-\frac{1}{2}q\bar{q}\\
0 & 1 & -\bar{q}\\
0 & 0 & 1
\end{pmatrix} \mid q=x^1+\ui x^4+\uj x^3+\uk x^2, h=\ui y^3+\uj y^2+\uk y^1 \right\}.
\end{align*}
Above, $x^1,\dots,x^4,y^1,y^2,y^3\in \R$ and $\ui, \uj,\uk$ are the familiar units of the skew field of quaternions, which are known to obey the relations
\begin{align*}
\ui^2=\uj^2=\uk^2=-1\,, && \ui \uj=-\uj \ui= \uk.
\end{align*}
Note that each group $N_i$ is diffeomorphic to $\R^8$, and there are global coordinates 
\begin{align*}
N_1=H_1(2)_{x^1,\dots,x^4,y^1}\times \R^3_{y^2,y^3,x^5}\,, && 
N_2= H_2(1)_{x^1,\dots,x^4,y^2,y^3}\times\R^2_{y^1,x^5}\,,\\ N_3=H_3(1)_{x^1,\dots,x^4,y^1,y^2,y^3}\times\R_{x^5}\,.
\end{align*}

The Lie algebras of the $N_i$ can be characterized in terms of left-invariant frames $\{e_1,\dots,e_{8}\}$ satisfying the following structure equations: 
\begin{enumerate}

\vspace{0.3cm}
\item[$N_1$: ] $[e_1,e_2]=-[e_3,e_4]=e_5$, and all other brackets vanish; 

\vspace{0.4cm}
\item[$N_2$: ] $[e_1,e_3]=[e_2,e_4]=e_6,\,\, [e_1,e_4]=-[e_2,e_3]=e_7$, and all other brackets vanish; 

\vspace{0.4cm}
\item[$N_3$: ] $[e_1,e_2]=-[e_3,e_4]=e_5, \,\, [e_1,e_3]=[e_2,e_4]=e_6,\,\, [e_1,e_4]=-[e_2,e_3]=e_7$, and all other brackets vanish. 
\end{enumerate}
 
In each case, using the frame $\{e_1,\dots,e_8\}$ we can define the left-invariant HKT structure as consisting of the standard metric
$$
g=\sum_{r=1}^8 e^{r}\otimes e^r
$$
and the three complex structures  $(J_1,J_2,J_3)$ defined by 
$$
J_r(e_1)=e_{r+1}\,,\quad J_r(e_5)=e_{r+5}\,,\quad r=1,2,3\,.  
$$

Let us fix co-compact lattices

\begin{enumerate}

\vspace{0.3cm}
\item[] $\Gamma_1=\mathbb{Z}^3\times \left\{ \begin{pmatrix}
1 & a & c\\
0 & 1 & b^t\\
0 & 0 & 1
\end{pmatrix} \mid a,b\in \Z^2,\, c\in \Z \right\}\subset N_1$\,;

\vspace{0.4cm}
\item[] $\Gamma_2=\mathbb{Z}^2\times \left\{ \begin{pmatrix}
1 & z & u\\
0 & 1 & w\\
0 & 0 & 1
\end{pmatrix} \mid u,z,w\in \Z+\ui\Z \right\}\subset N_2$\,; 

\vspace{0.4cm}
\item[] $\Gamma_3=\mathbb{Z}\times \left\{ \begin{pmatrix}
1 & q & h-\frac{1}{2}q\bar{q}\\
0 & 1& -\bar q\\
0 & 0 & 1
\end{pmatrix} \mid q\in \Z+\ui\Z+\uj\Z+\uk\Z\,,\quad h\in \ui\Z+\uj\Z+\uk\Z \right\}\subset N_3$\,. 
\end{enumerate}

For $r=1,2,3$ we denote by $M_r=\Gamma_r\backslash N_r$ the compact nilmanifold obtained by quotienting $N_r$ by $\Gamma_r$. The 
left-invariant quadruple $(J_1,J_2,J_3,g)$ on $N_r$ induces an HKT structure on $M_r$.  Let $\{Z_1,\dots,Z_4\}$ indicate the left-invariant 
$(1,0)$-frame $Z_r=e_{2r-1}-\ui J_1(e_{2r-1})$, $r=1,\dots,4$, and denote by $\{\zeta^1,\dots,\zeta^4\}$  the dual $(1,0)$-coframe. Taking in account
$$
\partial_{J_2}=-J_2^{-1}\bar \partial J_2\,,
$$
we deduce the following identity, holding for every smooth real map $\phi$ on $M_r$:
\begin{equation*}
\begin{split}
\partial \partial_{J_2} \phi=&\partial J_2 \bar{\partial}\phi=\partial J_2 \left(\bar{Z}_1(\phi) \bar{\zeta}^1+\bar{Z}_2(\phi)\bar{\zeta}^2+\bar{Z}_3(\phi)\bar{\zeta}^3+\bar{Z}_4(\phi)\bar{\zeta}^4\right)\\
=&\partial \left(\bar{Z}_1(\phi) \zeta^2-\bar{Z}_2(\phi)\zeta^1+\bar{Z}_3(\phi)\zeta^4-\bar{Z}_4(\phi)\zeta^3\right)\\
=&\left(Z_1\bar{Z}_1(\phi)+Z_2\bar{Z}_2(\phi)\right)\zeta^{12}
+
\left(Z_3\bar{Z}_2(\phi)-Z_1\bar{Z}_4(\phi)\right)\zeta^{13}+\left(Z_4\bar{Z}_2(\phi)+Z_1\bar{Z}_3(\phi)\right)\zeta^{14}\\
&-
\left(Z_3\bar{Z}_1(\phi)+Z_2\bar{Z}_4(\phi)\right)\zeta^{23}+\left(Z_2\bar{Z}_3(\phi)-Z_4\bar{Z}_1(\phi)\right)\zeta^{24}+\left(Z_3\bar{Z}_3(\phi)+Z_4\bar{Z}_4(\phi)\right)\zeta^{34}\,.
\end{split}
\end{equation*}
Since
\begin{equation*}
\Omega=2(\zeta^{12}+\zeta^{34})\,,
\end{equation*}
it follows that
\begin{multline*}
(\Omega+\partial \partial_{J_2}\phi)^2= 2\left(Z_1\bar{Z}_1(\phi)+Z_2\bar{Z}_2(\phi)+2\right)\left(Z_3\bar{Z}_3(\phi)+Z_4\bar{Z}_4(\phi)+2\right)\zeta^{1234}\\
-2\left(Z_3\bar{Z}_2(\phi)-Z_1\bar{Z}_4(\phi)\right)\left(Z_2\bar{Z}_3(\phi)-Z_4\bar{Z}_1(\phi)\right)\zeta^{1234}\\
-2\left(Z_4\bar{Z}_2(\phi)+Z_1\bar{Z}_3(\phi)\right)\left(Z_3\bar{Z}_1(\phi)+Z_2\bar{Z}_4(\phi)\right)\zeta^{1234}\,,
\end{multline*}
in other words
\begin{multline}\label{general}
(\Omega+\partial \partial_{J_2}\phi)^2= 2\Bigl( \left(Z_1\bar{Z}_1(\phi)+Z_2\bar{Z}_2(\phi)+2\right)\left(Z_3\bar{Z}_3(\phi)+Z_4\bar{Z}_4(\phi)+2\right)\\
-\left(Z_3\bar{Z}_2(\phi)-Z_1\bar{Z}_4(\phi)\right)\left(Z_2\bar{Z}_3(\phi)-Z_4\bar{Z}_1(\phi)\right)\\
-\left(Z_4\bar{Z}_2(\phi)+Z_1\bar{Z}_3(\phi)\right)\left(Z_3\bar{Z}_1(\phi)+Z_2\bar{Z}_4(\phi)\right)\Bigr )\zeta^{1234}\,.
\end{multline}

Furthermore, every manifold $M_i$ is naturally a principal $T^3$-bundle over $T^5$ with projection 
$$
\pi\colon M_i\to T^{5}_{x^1\dots x^5}\,.
$$
A smooth function on $M_i$ is invariant under the action of the principal fibre $T^3$ if and only if it depends only on the five coordinates $\{x^1,\dots,x^5\}$. 
What is more, $T^3$-invariant functions on $M_i$ are naturally identified with  functions on $T^5$. As mentioned in the introduction, for a $T^3$-invariant real map  $F$ condition \eqref{normalization} becomes \eqref{normalization1}. 
 Further assuming that the HKT potential $\phi$ is $T^3$-invariant, equation \eqref{eq_quaternionic_Calabi_conjecture} can be written as \eqref{eq_equation} on $T^5$. 

\begin{oss}
The Lie algebras of the $ 2 $-step nilpotent Lie groups $ N_i $ all have $ 4 $-dimensional centre $ \mathfrak{z}=\{ e_5,e_6,e_7,e_8 \}$. 
Therefore the nilmanifolds $ M_i $ can be regarded in a natural way as principal $ T^4 $-bundles over a torus $ T^4 $ if we project onto the first four coordinates $ \{x^1,\dots,x^4\} $. From this point of view, requiring all data to be invariant under the action of the fibre $ T^4$ implies that the resulting equation can be written as the following Poisson equation on the base $ T^4 $
\[
\Delta \phi=\phi_{11}+\phi_{22}+\phi_{33}+\phi_{44}=\mathrm{e}^F-1\,.
\]
And this can be solved using standard techniques.
\end{oss}

\bigskip 
From this point on we shall focus on equation \eqref{eq_equation}. In order to simplify the notation let us set  
$$
A=\phi_{11}+\phi_{22}+\phi_{33}+\phi_{44}+1\,,\quad  B=\phi_{55}+1\,.
$$

\begin{lem}
If $ \phi\in C^2(T^5) $ is a solution to \eqref{eq_equation}, then $ A>0,B>0 $ and
\begin{equation}
\label{eq_Deltaphi+2>0}
0<2\mathrm{e}^{F/2}\leq \Delta \phi +2\,.
\end{equation}
\end{lem}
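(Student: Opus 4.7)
The plan is to combine two elementary observations: equation \eqref{eq_equation} forces $AB > \mathrm{e}^F > 0$, and at any minimum point of $\phi$ both $A$ and $B$ are bounded below by $1$.

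First, I would rewrite equation \eqref{eq_equation} as
\[
AB = \mathrm{e}^F + \phi_{15}^2 + \phi_{25}^2 + \phi_{35}^2 + \phi_{45}^2,
\]
so that $AB > 0$ at every point of $T^5$. In particular $A$ and $B$ never vanish, and hence by continuity each of them has constant sign on the connected manifold $T^5$.

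Next, I would evaluate at a point $p_*\in T^5$ where $\phi$ attains its minimum. There the Hessian of $\phi$ is positive semi-definite, so $\phi_{ii}(p_*) \geq 0$ for every $i=1,\dots,5$. This gives $A(p_*)\geq 1$ and $B(p_*)\geq 1$, which combined with the sign-constancy above forces $A>0$ and $B>0$ everywhere on $T^5$.

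Finally, the inequality in \eqref{eq_Deltaphi+2>0} reduces to AM-GM applied to the positive quantities $A,B$:
\[
\Delta\phi + 2 = A + B \geq 2\sqrt{AB} \geq 2\sqrt{\mathrm{e}^F} = 2\mathrm{e}^{F/2} > 0.
\]
The argument is short; the only spot requiring care is to rule out the possibility that $A$ and $B$ might be simultaneously negative (which $AB>0$ alone does not forbid), and this is precisely what makes the detour through a minimum point of $\phi$ necessary.
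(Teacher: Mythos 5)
Your proof is correct and follows essentially the same path as the paper's: deduce $AB\geq\mathrm e^F>0$ from the equation, use a minimum point of $\phi$ and the sign-constancy of $A,B$ on the connected torus to conclude $A,B>0$, then apply the elementary inequality $A+B\geq 2\sqrt{AB}$. The only cosmetic difference is that the paper writes this last step as $(A+B)^2\geq 4AB$ before taking the square root, and only invokes $\phi_{55}(p_0)\geq 0$ (giving $B(p_0)>0$) rather than the full Hessian positivity you cite.
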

\begin{proof}
From equation \eqref{eq_equation} we infer $ AB\geq \mathrm{e}^F>0$. Hence $ A $ and $ B $ have the same sign. At a point $p_0$ where $ \phi $ attains its minimum we must have $ \phi_{55}(p_0)\geq 0$. This implies $B>0 $ and then $A>0$. Finally, by using $ A^2+B^2\geq 2AB $ we obtain 
\[
(\Delta \phi+2)^2=(A+B)^2\geq 4AB\geq 4\mathrm{e}^F>0\,.
\]
Taking the square root produces \eqref{eq_Deltaphi+2>0}.
\end{proof}

\begin{prop}
\label{prop_ellipticity}
Equation \eqref{eq_equation} is elliptic. More precisely, if $ \phi\in C^2(T^5) $ denotes a solution to \eqref{eq_equation} then
\begin{equation}
\label{eq_ellipticity}
A\xi_5^2+B(\xi_1^2+\xi_2^2+\xi_3^2+\xi_4^2)-2\sum_{i=1}^4\phi_{i5}\xi_i\xi_5 \geq \lambda(\phi) (\xi_1^2+\xi_2^2+\xi_3^2+\xi_4^2+\xi_5^2)
\end{equation}
for every $ (\xi_1,\xi_2,\xi_3,\xi_4,\xi_5)\in \R^5 $, where
\begin{equation*}
\lambda(\phi)=\frac{1}{2}\left(A+B- \sqrt{(A+B)^2-4\mathrm{e}^F}\right).
\end{equation*}
\end{prop}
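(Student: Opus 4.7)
The plan is to view the left-hand side of \eqref{eq_ellipticity} as the quadratic form $\xi^{T} L \xi$ associated with the symmetric matrix
\[
L=\begin{pmatrix} B\,I_4 & -v\\ -v^{T} & A\end{pmatrix},\qquad v=(\phi_{15},\phi_{25},\phi_{35},\phi_{45})^{T},
\]
and to identify $\lambda(\phi)$ as the smallest eigenvalue of $L$. Since ellipticity and the sharp lower bound both follow at once from this, the whole argument reduces to a linear-algebra computation on a $5\times 5$ symmetric matrix with a single off-diagonal block of rank one.

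First I would exploit the block structure of $L$. Any vector of the form $(\eta,0)$ with $\eta\in\R^{4}$ orthogonal to $v$ is an eigenvector of $L$ with eigenvalue $B$, so there is a three-dimensional eigenspace at the eigenvalue $B$, which is positive by \eqref{eq_Deltaphi+2>0}. The orthogonal complement of this eigenspace is the two-dimensional subspace spanned by $u_{1}=(v/\lvert v\rvert,0)$ and $u_{2}=(0,0,0,0,1)$, and in the basis $\{u_{1},u_{2}\}$ the restriction of $L$ is represented by the symmetric matrix
\[
\widetilde L=\begin{pmatrix} B & -\lvert v\rvert \\ -\lvert v\rvert & A\end{pmatrix}.
\]
(If $v=0$ the decomposition simplifies: $L$ is already diagonal with eigenvalues $B,B,B,B,A$, and the statement is straightforward.) Its eigenvalues are
\[
\frac{A+B\pm\sqrt{(A-B)^{2}+4\lvert v\rvert^{2}}}{2}.
\]

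Next I would bring the equation into play. Equation \eqref{eq_equation} reads $AB-\lvert v\rvert^{2}=\mathrm{e}^{F}$, so
\[
(A-B)^{2}+4\lvert v\rvert^{2}=(A+B)^{2}-4AB+4\lvert v\rvert^{2}=(A+B)^{2}-4\mathrm{e}^{F},
\]
and the two eigenvalues of $\widetilde L$ become
\[
\frac{A+B\pm\sqrt{(A+B)^{2}-4\mathrm{e}^{F}}}{2}.
\]
The smaller one is exactly $\lambda(\phi)$; it is strictly positive since $(A+B)^{2}>(A+B)^{2}-4\mathrm{e}^{F}$.

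Finally I would check that $\lambda(\phi)$ is in fact the minimum over all five eigenvalues, i.e.\ that $\lambda(\phi)\le B$. This is equivalent to $A-B\le\sqrt{(A+B)^{2}-4\mathrm{e}^{F}}$, which, after squaring in the only nontrivial case $A>B$, becomes $AB\ge\mathrm{e}^{F}$ and holds by \eqref{eq_equation}. Hence the smallest eigenvalue of $L$ is $\lambda(\phi)$, which yields \eqref{eq_ellipticity} and proves ellipticity. No real obstacle is expected; the only point requiring care is to verify that the $2\times 2$ block's smaller eigenvalue dominates $B$ from below, and this is settled by the very equation we are linearizing.
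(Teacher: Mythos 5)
Your argument is correct and amounts to the same computation as the paper's: both identify $\lambda(\phi)$ as the smallest eigenvalue of the $5\times 5$ symbol matrix and use the equation $AB-\lvert v\rvert^2=\mathrm{e}^F$ to simplify the discriminant. The paper simply expands $\det(P(\phi)-\lambda I)=(B-\lambda)^3(\lambda^2-(A+B)\lambda+\mathrm{e}^F)$ directly rather than block-diagonalizing, and checks $\lambda_-\le B\le\lambda_+$ via the inequality $(A+B)^2-4\mathrm{e}^F\ge(A-B)^2$ in one stroke instead of splitting into cases; these are only presentational differences.
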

\begin{proof}
The principal symbol of  the linearized equation at a solution $ \phi $ equals
\begin{equation*}
A\xi_5^2+B(\xi_1^2+\xi_2^2+\xi_3^2+\xi_4^2)-2\phi_{15}\xi_1\xi_5-2\phi_{25}\xi_2\xi_5-2\phi_{35}\xi_3\xi_5-2\phi_{45}\xi_4\xi_5
\end{equation*}
and the corresponding matrix is
\[
P(\phi)=\begin{pmatrix}
B & 0 & 0 & 0 & -\phi_{15} \\
0 & B & 0 & 0 & -\phi_{25}  \\
0 & 0 & B & 0 & -\phi_{35}\\
0 & 0 & 0 & B & -\phi_{45} \\
-\phi_{15} & -\phi_{25} & -\phi_{35} & -\phi_{45} & A\\
\end{pmatrix}\,.
\]
Since, by \eqref{eq_equation}, 
\[
\begin{split}
\det(P(\phi)- \lambda I)&=(B-\lambda)^3\left( (A-\lambda)(B-\lambda)-(\phi_{15}^2+\phi_{25}^2+\phi_{35}^2+\phi_{45}^2) \right)\\
&=(B-\lambda)^3\left( \lambda^2 -(A+B)\lambda +\mathrm{e}^F \right)\,,
\end{split}
\]
the eigenvalues are $ \lambda= B $ and
\[
\lambda_\pm=\frac{1}{2}\left(A+B\pm \sqrt{(A+B)^2-4\mathrm{e}^F}\right)\,.
\]
Now $ (A+B)^2-4\mathrm{e}^F\geq(A-B)^2=((A+B)-2A)^2=((A+B)-2B)^2 $, so that
\[
0<\lambda_-\leq B\leq \lambda_+\,.
\]
This proves the claim.
\end{proof}
  
\section{$ C^0 $-estimate} \label{C^0}

Although the a priori $ C^0 $-estimate for equation \eqref{eq_equation} can be deduced from the $C^0$-estimate of the quaternionic Monge-Amp\`ere equation, as  shown in \cite{Alesker-Shelukhin (2017),Alesker-Verbitsky (2010),Sroka}, we shall prove this fact using an argument that is specific to our setup. 

\medskip 

Call $ B_R(x_0) $ the open ball in $ \R^N $ centred at $ x_0 $ and of radius $ R>0 $. We need to recall the following results:
\begin{teor}[Weak Harnack Estimate, Theorem 8.18 in \cite{Gilbarg-Trudinger (1983)}]
\label{teor_weak_Harnack}
Consider $ 1\leq p <N/(N-2) $, and $ q>N $, where $ N>2 $ is an integer. For every $ R>0 $ there exists a positive constant $ C=C(N,R,p,q) $ such that
\begin{equation*}
r^{-N/p}\|u\|_{L^p(B_{2r}(x_0))}\leq C\left( \inf_{x\in B_r(x_0)} u(x)+r^{2-2N/q}\|f\|_{L^{q/2}(B_{R}(x_0))} \right),
\end{equation*}
for any $ x_0\in \R^N $, $ 0<r<R/4 $, $ f\in C^0(\R^N) $, and any $ u \in C^2(\R^N) $ that is non-negative on $ B_R(x_0) $ and such that $ \Delta u(x)\leq f(x) $ for all $ x\in B_R(x_0) $.
\end{teor}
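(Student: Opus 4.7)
The plan is to follow the classical Moser iteration scheme combined with a John-Nirenberg argument, which is the approach taken in \cite{Gilbarg-Trudinger (1983)} for linear divergence-form elliptic operators. After translating the hypothesis $\Delta u\leq f$ into the supersolution form $-\Delta u \geq -f$, the target inequality says that on a scaled ball the $L^p$-average of the non-negative supersolution $u$ is controlled by its infimum on a smaller ball, up to the inhomogeneous term coming from $f$. A first reduction is to rescale to the unit ball by replacing $u$ with $\tilde u(y) = u(x_0+ry) + r^{2-2N/q}\|f\|_{L^{q/2}(B_R(x_0))}$; this fixes the geometry and lets one track explicitly how the $f$-term propagates through the estimates.

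The first main step is a Caccioppoli-type inequality for negative powers $u^\beta$. Testing the weak formulation of $-\Delta u \geq -f$ against the cut-off $\eta^2 u^{2\beta - 1}$ with $\beta<0$ and integrating by parts yields an estimate of the form
\[
\int \eta^2 \, |\nabla (u^\beta)|^2 \leq C(\beta) \int |\nabla \eta|^2 \, u^{2\beta} + (\text{remainder involving } f).
\]
Combining this with the Sobolev embedding on $\R^N$ produces a reverse iteration inequality $\| u^\beta \|_{L^{\chi p}} \leq C \, \| u^\beta \|_{L^{p}}$ with $\chi = N/(N-2)$, and a standard Moser iteration starting from any negative exponent then gives $\sup_{B_r} u^{-1} \leq C \, \| u^{-1} \|_{L^{p_0}(B_{2r})}$ for any $p_0>0$. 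Equivalently, $\| u \|_{L^{-p_0}(B_{2r})} \leq C \, \inf_{B_r} u$.

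The second main step is to connect this negative-exponent control with a small positive-exponent estimate, so as to recover $\| u \|_{L^{p}(B_{2r})}$ on the left-hand side. The bridge is the fact that $w = \log u$ lies in $\mathrm{BMO}(B_{2r})$ with a quantitative bound, which follows from a Caccioppoli-type calculation using the test function $\eta^2/u$ together with the Poincar\'e inequality. John-Nirenberg then supplies some $p_0>0$ and a constant $C$ with $\| u \|_{L^{p_0}(B_{2r})} \, \| u^{-1} \|_{L^{p_0}(B_{2r})} \leq C \, r^{2N/p_0}$, and combining this with the Moser estimate yields the claim. The main obstacle will be the careful bookkeeping of the $f$-dependent terms through both the Moser iteration and the BMO estimate, so that they combine into the single factor $r^{2-2N/q}\|f\|_{L^{q/2}}$ with the sharp scaling stated in the theorem; this requires H\"older's inequality with the exponent $q/2>N/2$ at every step so that the $f$-contributions sum in a convergent geometric series.
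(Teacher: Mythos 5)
The paper does not actually prove this statement: it is quoted verbatim as Theorem~8.18 of Gilbarg--Trudinger \cite{Gilbarg-Trudinger (1983)} and used as a black box in the $C^0$-estimate. So there is no ``paper's own proof'' against which to compare your argument; what you have written is a sketch of the Gilbarg--Trudinger proof itself, and at the level of strategy it is the right one (Moser iteration for negative powers, a $\mathrm{BMO}$ estimate for $\log u$, John--Nirenberg to cross zero).

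That said, the sketch has one genuine omission. After the negative-exponent iteration and the John--Nirenberg crossover you obtain, for \emph{some} small $p_0>0$, an estimate of the form $r^{-N/p_0}\|u\|_{L^{p_0}(B_{2r})}\leq C(\inf_{B_r}u + \cdots)$, but the theorem asserts this for \emph{every} $p<N/(N-2)$. Closing that gap requires a second, upward Moser iteration using positive exponents $\beta\in(0,\tfrac12)$: testing $-\Delta u\geq -f$ against $\eta^2 u^{2\beta-1}$ for such $\beta$ gives a Caccioppoli inequality in the favourable direction precisely because the coefficient $2\beta-1$ is negative, and iterating the resulting reverse H\"older estimate from $p_0$ pushes the exponent up to any $p<N\chi/(N) = N/(N-2)$ before the restriction $\beta<\tfrac12$ kicks in. This is in fact where the range $1\leq p < N/(N-2)$ in the statement comes from, so it cannot be left implicit. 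A second, more minor point: to justify testing with $\eta^2 u^{2\beta-1}$ and $\eta^2/u$ one should work with $\bar u = u + r^{2-2N/q}\|f\|_{L^{q/2}}+\varepsilon$ (which you essentially do via the normalization $\tilde u$), and then let $\varepsilon\to 0$ at the end; worth stating, since otherwise the negative powers and $\log u$ are not a priori integrable.
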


\begin{teor}[Sz\'{e}kelyhidi, \cite{Szekelyhidi}]
\label{teor_Szekelyhidi}
Consider a map $ u\in C^2(\R^N) $ and assume there exist a point $ x_0\in \R^N $ and numbers $ R>0 $ and $ \epsilon>0 $ such that $ \min_{\abs{x-x_0}\leq R} u(x)=u(x_0) $, and
\[
u(x_0)+2R\epsilon \leq \min_{\abs{x-x_0}=R} u(x)\,.
\]
Then 
\begin{equation*}
\epsilon^N\leq \frac{2^N}{\abs{B_R(0)}}\int_{\Gamma_\epsilon} \det(D^2u)\,,
\end{equation*}
where
\[
\Gamma_\epsilon =\left\{ x\in B_R(x_0)\mid  u(y)\geq u(x)+\nabla u(x)\cdot (y-x),\, \forall y\in B_R(x_0),\, \abs{\nabla u(x)}<\frac{\epsilon}{2} \right\}.
\]
\end{teor}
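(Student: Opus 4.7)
The plan is to run the Alexandrov--Bakelman--Pucci argument via the normal mapping. The idea is to produce, for every $p$ in a small ball about the origin of $\R^N$, a point of $\Gamma_\epsilon$ whose gradient equals $p$; then the area formula applied to $\nabla u$ converts the volume of this target ball into a lower bound for $\int_{\Gamma_\epsilon}\det(D^2u)$.

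First I would translate and set $v(y):=u(x_0+y)-u(x_0)$ on $\overline{B_R(0)}$, so that $v\geq 0$ with $v(0)=0$ and $v\geq 2R\epsilon$ on $\partial B_R(0)$. For each $p\in\R^N$ with $\abs{p}<\epsilon/2$ I would consider the tilted function $v_p(y):=v(y)-p\cdot y$. At the origin $v_p(0)=0$, while on $\partial B_R(0)$ one has $v_p\geq 2R\epsilon-\abs{p}R>3R\epsilon/2>0$. Hence $v_p$ attains its minimum over $\overline{B_R(0)}$ at some interior point $y_p$; there $\nabla v(y_p)=p$ and, by minimality, $v(y)\geq v(y_p)+p\cdot(y-y_p)$ for every $y\in B_R(0)$. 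Translating back, the point $x_p:=x_0+y_p$ lies in $\Gamma_\epsilon$ and satisfies $\nabla u(x_p)=p$. This yields the crucial inclusion $B_{\epsilon/2}(0)\subseteq\nabla u(\Gamma_\epsilon)$.

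Next, every $x\in\Gamma_\epsilon$ is by definition a point where $u$ sits above its tangent plane on a full neighborhood, so $D^2u(x)$ is positive semidefinite and $\det(D^2u)(x)\geq 0$. Since the map $\nabla u\colon\Gamma_\epsilon\to\R^N$ is $C^1$ with Jacobian determinant $\det(D^2u)$, the area formula gives
\[
\abs{B_{\epsilon/2}(0)}\leq \int_{\Gamma_\epsilon}\abs{\det(D^2u)}\,dx=\int_{\Gamma_\epsilon}\det(D^2u)\,dx,
\]
and a brief dimensional rearrangement of $\abs{B_{\epsilon/2}(0)}$ against $\abs{B_R(0)}$ yields the stated inequality.

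The one genuinely geometric step, as opposed to routine packaging, is the existence of an interior minimizer $y_p$ for every tilt $p$ with $\abs{p}<\epsilon/2$. This is where both hypotheses enter in an essential way: the assumption $u(x_0)=\min_{\overline{B_R(x_0)}}u$ rules out an interior obstruction from below, while the boundary growth $u(x_0)+2R\epsilon\leq u$ on $\abs{x-x_0}=R$ is calibrated precisely to absorb any tilt of norm less than $\epsilon/2$, preventing the minimum from escaping to $\partial B_R(0)$.
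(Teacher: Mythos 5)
Your proof follows the standard Alexandrov--Bakelman--Pucci normal-mapping strategy, which is indeed Sz\'ekelyhidi's method (the paper itself offers no proof of this statement and merely cites \cite{Szekelyhidi}), and everything up to and including the area formula is sound: tilts $\abs{p}<\epsilon/2$ produce interior contact points lying in $\Gamma_\epsilon$, so $B_{\epsilon/2}(0)\subseteq\nabla u(\Gamma_\epsilon)$; on $\Gamma_\epsilon$ the Hessian is positive semidefinite; and the area formula gives $\abs{B_{\epsilon/2}(0)}\leq \int_{\Gamma_\epsilon}\det(D^2u)$.

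The ``brief dimensional rearrangement'' at the end, however, does not produce the stated inequality. Since
\[
\abs{B_{\epsilon/2}(0)}=\Bigl(\frac{\epsilon}{2}\Bigr)^{N}\abs{B_1(0)}=\frac{\epsilon^{N}}{(2R)^{N}}\,\abs{B_R(0)},
\]
what your argument actually proves is
\[
\epsilon^{N}\leq \frac{(2R)^{N}}{\abs{B_R(0)}}\int_{\Gamma_\epsilon}\det(D^2u)=\frac{2^{N}}{\abs{B_1(0)}}\int_{\Gamma_\epsilon}\det(D^2u),
\]
which differs from the stated bound by a factor of $R^{N}$. For $R\leq 1$ your inequality is stronger and implies the statement, so the application in the paper (where $R=1/2$) is unaffected; but for $R>1$ it is strictly weaker and your argument does not close the gap. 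Nor can it: the gradient restriction $\abs{\nabla u}<\epsilon/2$ built into the definition of $\Gamma_\epsilon$ caps the normal image at $B_{\epsilon/2}(0)$ independently of $R$, so the tilting method cannot insert a larger ball. Indeed the statement as printed appears to fail for $R>1$: take $u(x)=\tfrac{C}{2}\abs{x-x_0}^{2}$ with $\epsilon=CR/4$; the hypotheses hold, $\Gamma_\epsilon=B_{\epsilon/(2C)}(x_0)$, $\int_{\Gamma_\epsilon}\det(D^2u)=\epsilon^{N}\abs{B_1(0)}/2^{N}$, and the claimed inequality reduces to $R^{N}\leq 1$. The factor $\abs{B_R(0)}$ in the paper's restatement of Sz\'ekelyhidi's theorem is thus most plausibly a normalization slip, and the bound your argument delivers (with $\abs{B_1(0)}$, equivalently with $(2R)^{N}$ replacing $2^{N}$) is the correct sharp one. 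You should either record the inequality you actually derived, or, if you want to match the paper verbatim, note explicitly that you are in the regime $R\leq 1$.
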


Now, let us identify functions on $ T^5 $ with functions $ \phi \colon \R^5 \to \R $ that are periodic in each variable. Denote by $ C^n(T^5) $ the Banach space of functions $ \phi \colon T^5 \to \R $ with $ C^n $-norm
\[
\norma{\phi}_{C^n}=\max_{\abs{I}\leq n} \sup_{x\in \R^5}\abs{\partial^I \phi(x)}
\]
where $ I=\{i_1,\dots,i_5\} $. We are adopting the multi-index notation $ \partial^I=\partial_1^{i_1} \partial_2^{i_2}\partial_3^{i_3}\partial_4^{i_4}\partial_5^{i_5} $ with $ \abs{I}=i_1+i_2+i_3+i_4+i_5. $ For $ \alpha \in (0,1) $ we also consider the Banach space $ C^{n,\alpha}(T^5) $ of functions $ \phi\in C^n(T^5) $ with H\"{o}lder-continuous derivatives of order $ n $: 
\[
\norma{\phi}_{C^{n,\alpha}}=\max\{ \norma{\phi}_{C^n}, \abs{\phi}_{C^{n,\alpha}} \} <\infty\,,
\]
where
\[
\abs{\phi}_{C^{n,\alpha}}=\max_{\abs{I}=n}\sup_{x\in \R^3} \sup_{0<\abs{h}\leq 1}	\frac{\abs{\partial^I\phi(x+h)-\partial^I\phi(x)}}{\abs{h}^\alpha}\,.
\]

Set
\[
C_*^k(T^5)=\left\{ \phi \in C^k(T^5) \mid \int_{K} \phi=0  \right\}
\]
where
\[
K=\left[ -\frac{1}{2},\frac{1}{2}\right]^5. 
\]

\begin{teor}
Assume that $ F\in C^0(T^5) $ satisfies \eqref{normalization1}. Let $ \phi \in C^2_*(T^5) $ be a solution to \eqref{eq_equation}. Then there is a positive constant $ C_0 $, depending on $ \norma{F}_{C^0} $ only, such that
\begin{equation}
\label{eq_C0-estimate}
\norma{\phi}_{C^0}\leq C_0\,.
\end{equation}
\end{teor}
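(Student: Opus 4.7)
The proof breaks into an upper bound for $M := \sup_{T^5}\phi$ and a lower bound for $m := \inf_{T^5}\phi$, both in terms of $\|F\|_{C^0}$; since $\int_K \phi = 0$ forces $M \geq 0 \geq m$, this yields \eqref{eq_C0-estimate}. For the upper bound, consider the nonnegative periodic function $v := M - \phi$. By \eqref{eq_Deltaphi+2>0}, $\Delta v = -\Delta\phi \leq 2$, so $v$ satisfies the hypotheses of the weak Harnack inequality (Theorem \ref{teor_weak_Harnack}) with $f \equiv 2$. Applying this on a ball $B_R(x_0) \subset \R^5$ centered at a lift of a maximum point of $\phi$, we have $v(x_0) = 0$ and hence $\inf_{B_r(x_0)} v = 0$; choosing $R$ larger than the diameter of a fundamental domain so that $B_{2r}(x_0)$ contains a full copy of $T^5$, we obtain $\|v\|_{L^p(T^5)} \leq C(\|F\|_{C^0})$. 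Since $M = \int_{T^5} v$, this bounds $M$.

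For the lower bound, the crucial ingredient is an algebraic observation specific to the reduced equation \eqref{eq_equation}. At every point where $D^2\phi$ is positive semi-definite, the $2 \times 2$ principal minors involving the index $5$ give $\phi_{i5}^2 \leq \phi_{ii}\phi_{55}$ for $i = 1,2,3,4$. Summing over $i$ and inserting into \eqref{eq_equation},
\[
e^F \;=\; AB - \sum_{i=1}^{4}\phi_{i5}^2 \;\geq\; AB - (A-1)(B-1) \;=\; A + B - 1,
\]
so at such points $A + B \leq 1 + e^F \leq 1 + e^{\|F\|_{C^0}}$. Since $\phi_{ii} \geq 0$ there, this bounds every diagonal entry of $D^2\phi$, and via $|\phi_{ij}| \leq \sqrt{\phi_{ii}\phi_{jj}}$ also every off-diagonal entry; Hadamard's inequality then yields a uniform pointwise estimate $\det(D^2\phi) \leq C(\|F\|_{C^0})$ wherever $D^2\phi \geq 0$.

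We convert this pointwise estimate into a lower bound on $m$ via Székelyhidi's theorem (Theorem \ref{teor_Szekelyhidi}). Lifting $\phi$ to a periodic function on $\R^5$, pick a minimum point $x_0$ and introduce the auxiliary function $w(x) := \phi(x) - m + \tfrac{\alpha}{2}|x - x_0|^2$ for a small parameter $\alpha > 0$. Then $w \geq 0$ attains its minimum at $x_0$, $\min_{|x-x_0|=R} w \geq \alpha R^2/2$, and with $\epsilon := \alpha R/4$ the hypotheses of Theorem \ref{teor_Szekelyhidi} are satisfied. On $\Gamma_\epsilon$, $D^2 w = D^2\phi + \alpha I \geq 0$, so for $\alpha$ small the algebraic bound above (with the obvious $\alpha$-corrections) still gives $\det(D^2 w) \leq C(\|F\|_{C^0})$. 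The theorem then yields
\[
\epsilon^5 \;\leq\; \frac{2^5}{|B_R(0)|}\int_{\Gamma_\epsilon}\det(D^2 w) \;\leq\; 2^5 C(\|F\|_{C^0}).
\]
A careful choice of $\alpha$ and $R$ (with $R$ of the order of the diameter of a fundamental domain), combined with the bound on $M$ from the first step, converts this into the required lower bound on $m$.

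\textbf{Main obstacle.} The essential new input is the algebraic inequality $A + B \leq 1 + e^F$ on the set where $D^2\phi$ is positive semi-definite; this is the quaternionic Monge-Amp\`ere structure made manifest in the reduced setting, and it is precisely what allows Székelyhidi's theorem to be applied. The delicate technical point is the final conversion from the bound on $\epsilon$ to a quantitative lower bound on $m$, which requires a careful interplay between the perturbation parameter $\alpha$, the radius $R$, and the periodicity of $\phi$ on $T^5$.
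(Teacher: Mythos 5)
Your first step is fine: applying Theorem \ref{teor_weak_Harnack} to $v=\max\phi-\phi\ge 0$ with $\Delta v\le 2$ (from \eqref{eq_Deltaphi+2>0}) and using $\int_K\phi=0$ does give an upper bound on $\sup\phi$, and your algebraic observation that $D^2\phi\ge 0$ together with \eqref{eq_equation} forces $A+B\le 1+\mathrm{e}^F$, hence a bound on $\det(D^2\phi)$, is exactly the determinant bound the paper uses on $\Gamma_\epsilon$. The gap is in the lower-bound step, and it is not merely a "delicate technical point": with $w=\phi-m+\tfrac{\alpha}{2}|x-x_0|^2$ the hypothesis of Theorem \ref{teor_Szekelyhidi} is satisfied for \emph{every} $\alpha>0$ with $\epsilon=\alpha R/4$, using nothing about $\phi$ beyond $\phi\ge m$; and once you estimate $\int_{\Gamma_\epsilon}\det(D^2w)$ by $\bigl(\sup\det(D^2 w)\bigr)\,\abs{B_R(0)}$, the resulting inequality $(\alpha R/4)^5\le 2^5C(\norma{F}_{C^0})$ involves only the free parameters $\alpha,R$ and $F$. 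Neither $m$, nor $M$, nor any quantity depending on $\phi$ appears in it (for fixed $\alpha,R$ it is a numerical statement, true or false independently of $\phi$), so no choice of $\alpha$ and $R$, and no combination with the bound on $M$, can convert it into a bound on $m$. The vertical normalization by $-m$ together with discarding $\mathrm{meas}(\Gamma_\epsilon)$ is precisely what empties the theorem of content.

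The missing idea is how the paper extracts information from Sz\'ekelyhidi's theorem: fix a universal $\epsilon_0$ (there $\epsilon_0=1/48$), keep the measure of $\Gamma_{\epsilon_0}$ instead of throwing it away, so that the determinant bound turns \eqref{eq_epsilon} into a \emph{lower} bound $\mathrm{meas}(\Gamma_{\epsilon_0})\ge C(\norma{F}_{C^0})>0$; and shift the auxiliary function by $\max_K\phi$ (still centred at a minimum point of $\phi$), so that the defining property of $\Gamma_{\epsilon_0}$ (the tangent-plane inequality together with $\abs{\nabla u}<\epsilon_0/2$) forces $\phi\le\min_K\phi+\epsilon_0/4$ on $\Gamma_{\epsilon_0}$. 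On this set of definite measure one has $\max_K\phi-\phi\ge(\max_K\phi-\min_K\phi)-1$, and pairing this with the weak Harnack bound $\norma{\max_K\phi-\phi}_{L^{4/3}(K)}\le 2C'$ (your first step, reused) bounds the oscillation, hence $\norma{\phi}_{C^0}$ via $\int_K\phi=0$. If you restructure your second step along these lines — lower bound on $\mathrm{meas}(\Gamma_{\epsilon_0})$ plus the pointwise property of $\Gamma_{\epsilon_0}$, rather than the bare $\epsilon^5$ inequality — your argument becomes essentially the paper's proof.
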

\begin{proof}
Let $ x_0\in \R^5 $ be a point where $ \phi $ attains its minimum on $ K $. Fix $ \epsilon>0 $ and define
\begin{equation}
\label{eq_02}
u(x)=\phi(x)-\max_K\phi+4\epsilon \abs{x-x_0}^2\,.
\end{equation}
Then
\begin{equation*}
u(x_0)+\epsilon=\phi(x_0)-\max_K\phi+\epsilon\leq \min_{\abs{x-x_0}=1/2}\phi(x)-\max_K \phi+\epsilon=\min_{\abs{x-x_0}=1/2}u(x)
\end{equation*}
and by Theorem \ref{teor_Szekelyhidi}, with $ R=1/2 $, we have 
\begin{equation}
\label{eq_epsilon}
\epsilon^5\leq\frac{2^5}{\abs{B_{1/2}(0)}}\int_{\Gamma_\epsilon}\det(D^2u)\,.
\end{equation}
Differentiating \eqref{eq_02} twice gives $ D^2u=D^2\phi+8\epsilon I $. Hence we may rewrite equation \eqref{eq_equation} as 
\begin{equation}
\label{eq_03}
(u_{11}+u_{22}+u_{33}+u_{44}-32\epsilon+1)(u_{55}-8\epsilon+1)-u_{15}^2-u_{25}^2-u_{35}^2-u_{45}^2=\mathrm{e}^F\,.
\end{equation}
Now, on $ \Gamma_\epsilon $ the function $ u $ is convex, therefore the Hessian matrix $ D^2u(x) $ is non-negative for all $ x\in \Gamma_\epsilon $. In particular $ u_{ii}(x)\geq 0 $ for all $ i=1,\dots,5 $ and every $ x\in \Gamma_\epsilon $. In addition, 
\begin{equation}
\label{eq_03b}
u_{ii}(x)u_{55}(x)-u_{i5}^2(x)\geq 0, \qquad \text{for all } i=1,\dots,5, \, \text{ and every } x\in \Gamma_\epsilon\,.
\end{equation}

Set $ \epsilon=\epsilon_0=1/48 $, so that from  \eqref{eq_03b} and \eqref{eq_03} we obtain, for every $ x\in \Gamma_{\epsilon_0} $,
\begin{align*}
\frac{\Delta u(x)}{5}&\leq \frac{5}{6}(u_{11}(x)+u_{22}(x)+u_{33}(x)+u_{44}(x))+\frac{1}{3}u_{55}(x)\\
&\leq \left(u_{11}(x)+u_{22}(x)+u_{33}(x)+u_{44}(x)+\frac{1}{3}\right)\left(u_{55}(x)+\frac{5}{6}\right)- \sum_{i=1}^{4}u_{i5}^2(x)-\frac{5}{18}\\
&=\mathrm{e}^{F(x)}-\frac{5}{18}\leq \mathrm{e}^{\max_K F}.
\end{align*}
Using again the fact that $ D^2u $ is non-negative on $ \Gamma_\epsilon $, the arithmetic-geometric mean inequality forces 
\begin{equation}
\label{eq_03c}
\det(D^2u(x))\leq \left( \frac{\Delta u(x)}{5} \right)^5\leq \mathrm{e}^{5\max_K F},\qquad \text{for every } x\in \Gamma_{\epsilon_0}\,.
\end{equation}
At last, \eqref{eq_epsilon} and \eqref{eq_03c} imply 
\begin{equation*}
\left(\frac{1}{48}\right)^5\frac{\abs{B_{1/2}(0)}}{2^5}\leq \int_{\Gamma_{\epsilon_0}} \det (D^2u)\leq \mathrm{e}^{5\max_KF} \mathrm{meas}(\Gamma_{\epsilon_0})\,,
\end{equation*}
i.e.
\begin{equation}
\label{eq_meas_Gamma}
\mathrm{meas}(\Gamma_{\epsilon_0})\geq \abs{B_{1/2}(0)}\left( \frac{\mathrm{e}^{-\max_KF}}{96} \right)^5=:C\,.
\end{equation}

Now observe that 
\[
u(x)\leq u(x_0)-\nabla u(x)\cdot (x_0-x)\leq u(x_0)+\frac{\epsilon_0}{4}\,,\qquad \text{for every } x\in \Gamma_{\epsilon_0}\,,
\]
that is
\[
\phi(x)-\max_K \phi+4\epsilon_0\abs{x-x_0}^2\leq \phi(x_0)-\max_K \phi+\frac{\epsilon_0}{4}=\min_K \phi- \max_K \phi +\frac{\epsilon_0}{4}\,, \qquad \text{for every } x\in \Gamma_{\epsilon_0}\,.
\]
This implies
\[
\max_K \phi- \min_K \phi \leq \max_K \phi - \phi(x)+ 1\,, \qquad \text{for every } x\in \Gamma_{\epsilon_0}\,.
\]
It follows that for every $ p\geq 1 $
\begin{equation*}
\left( \max_K \phi- \min_K\phi \right)\left( \mathrm{meas}(\Gamma_{\epsilon_0}) \right)^{1/p}\leq \left( \int_{\Gamma_{\epsilon_0}}\left( \max_K \phi-\phi+1 \right)^p \right)^{1/p}=\left \lVert \max_K \phi-\phi+1 \right \rVert_{L^p(\Gamma_{\epsilon_0})}\,,
\end{equation*}
and since $ \Gamma_{\epsilon_0}\subseteq B_{1/2}(x_0)\subseteq K+x_0 $, we have
\[
\left \lVert \max_K \phi-\phi+1 \right \rVert_{L^p(\Gamma_{\epsilon_0})}\leq \left \lVert \max_K \phi-\phi+1 \right \rVert_{L^p(K)}\,.
\]
Therefore, since $ \int_K\phi=0 $, we have $ \norma{\phi}_{C^0}\leq \max_K \phi-\min_K \phi $. Then \eqref{eq_meas_Gamma} implies 
\begin{equation}
\label{eq_phi_C0_Lp}
\norma{\phi}_{C^0}\leq \max_K \phi - \min_K \phi\leq C^{-1/p}\left(\left \lVert \max_K \phi-\phi\right \rVert_{L^p(K)} +1\right)\,, \qquad \forall p\geq 1\,.
\end{equation}
By \eqref{eq_Deltaphi+2>0} we see that $ \Delta(\max_K\phi-\phi)\leq 2 $, and since $ \max_K \phi-\phi \geq 0 $ we can apply Theorem \ref{teor_weak_Harnack} with $ \max_K \phi-\phi $ in place of $ u $, $ N=5 $, $ p=4/3 $, $ q=6 $, $ x_0\in K $ such that $ \phi(x_0)=\max_K \phi $, $ r=1/2 $ and $ R=3 $. This eventually shows there exists a positive constant $ C' $ satisfying
\begin{equation}
\label{eq_phi_Harnack}
\left \lVert \max_K \phi-\phi \right \rVert_{L^{4/3}(K)}\leq C'\left( \inf_K\left( \max_K \phi -\phi \right)+\norma{2}_{L^3(K)} \right)=2C'\,.
\end{equation}
Estimate \eqref{eq_C0-estimate} now follows from \eqref{eq_phi_C0_Lp} with $ p=4/3 $ and \eqref{eq_phi_Harnack}.
\end{proof}

\section{$ C^0 $-estimate for the Laplacian} \label{Laplacian}
In this section we shall prove a $ C^0 $-estimate for the Laplacian of $\varphi$. The technique we employ is an adaptation of that found in \cite{Buzano-Fino-Vezzoni (2015)}.

\begin{lem}\label{new_Luigi}
Let $\varphi$ be a $C^2$ function on the $n$-torus $T^n$, fix $\mu\in\R$ and pick a point $p_0$ where $ \Phi=(\Delta \phi +2)\mathrm{e}^{-\mu \phi}$ attains its maximum value. Define
$$
\eta_{ij}=\mu (\Delta \phi+2)(\phi_{ij}+\mu \phi_i\phi_j)-\Delta \phi_{ij}\,,\quad i,j=1,\dots,n\,.
$$
Then
$$
\eta_{ii}(p_0)\geq0	\,,\,\, \mbox{ and }\, \sqrt{\eta_{ii}\eta_{jj}}\geq |\eta_{ij}| \mbox{ at  }p_0\,,
$$
for every $i,j=1,\dots, n$.
\end{lem}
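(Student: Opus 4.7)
The plan is to exploit the standard maximum-principle conditions at $p_0$ and to recognize that, after using the first-order critical condition, the Hessian of $\Phi$ at $p_0$ is (up to a positive scalar) the negative of the matrix $(\eta_{ij})$. Then positive semidefiniteness of $(\eta_{ij})$ gives both conclusions at once.

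Concretely, write $u = \Delta \phi + 2$ and $\Phi = u\,\mathrm{e}^{-\mu \phi}$. First I would compute
\[
\Phi_i = \mathrm{e}^{-\mu \phi}(u_i - \mu u \phi_i).
\]
At the maximum point $p_0$, the condition $\nabla \Phi(p_0)=0$ yields
\[
u_i(p_0) = \mu u(p_0)\phi_i(p_0), \qquad i=1,\dots,n.
\]
Next I would differentiate again and obtain
\[
\Phi_{ij} = \mathrm{e}^{-\mu \phi}\bigl(u_{ij} - \mu \phi_j u_i - \mu \phi_i u_j + \mu^2 u \phi_i\phi_j - \mu u \phi_{ij}\bigr).
\]
Substituting the first-order relation $u_i = \mu u \phi_i$ (valid at $p_0$) into the two middle terms, the cross-terms collapse to $-2\mu^2 u \phi_i \phi_j$, and after combining with $\mu^2 u \phi_i\phi_j$ we get
\[
\Phi_{ij}(p_0) = \mathrm{e}^{-\mu \phi(p_0)}\bigl(\Delta\phi_{ij} - \mu u(\phi_{ij}+\mu \phi_i\phi_j)\bigr)(p_0) = -\mathrm{e}^{-\mu\phi(p_0)}\,\eta_{ij}(p_0),
\]
using $u_{ij}=\Delta\phi_{ij}$ (which follows from commuting partials on $T^n$).

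Finally I would invoke the second-order necessary condition at a maximum: the Hessian $(\Phi_{ij}(p_0))$ is negative semidefinite. Multiplying by the positive factor $\mathrm{e}^{\mu\phi(p_0)}$, this is equivalent to $(\eta_{ij}(p_0))$ being positive semidefinite. In particular, $\eta_{ii}(p_0)\geq 0$, and for any pair of indices $i,j$ the $2\times 2$ principal submatrix
\[
\begin{pmatrix} \eta_{ii} & \eta_{ij} \\ \eta_{ij} & \eta_{jj} \end{pmatrix}(p_0)
\]
is positive semidefinite, hence has nonnegative determinant, giving $\eta_{ii}\eta_{jj}\geq \eta_{ij}^2$ and therefore $\sqrt{\eta_{ii}\eta_{jj}}\geq |\eta_{ij}|$ at $p_0$.

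There is no genuine obstacle here; the only point requiring care is the algebraic cancellation in the Hessian computation, namely that the $-\mu\phi_j u_i - \mu\phi_i u_j + \mu^2 u \phi_i\phi_j$ combination reduces to $-\mu^2 u \phi_i\phi_j$ after using $u_i = \mu u\phi_i$, so that the remaining expression matches exactly $-\eta_{ij}$ up to the positive factor $\mathrm{e}^{-\mu\phi}$.
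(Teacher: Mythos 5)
Your proof is correct and follows essentially the same route as the paper: both compute $\nabla\Phi$ and the Hessian of $\Phi$, substitute the first-order critical point condition $u_i = \mu u\phi_i$ at $p_0$ into the Hessian, recognize that the resulting matrix is $-\mathrm{e}^{-\mu\phi}\,(\eta_{ij})$, and then read off the claimed inequalities from positive semidefiniteness of $(\eta_{ij})$ via its diagonal entries and $2\times 2$ principal minors. The only cosmetic difference is that you work in index notation while the paper uses the dyadic $\nabla\otimes\nabla$ notation.
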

\begin{proof}
We begin by recalling the standard formulas
$$
\nabla \Phi=\mathrm{e}^{-\mu \phi} \left( \nabla \Delta \phi-\mu (\Delta \phi+2)\nabla \phi \right)
$$
and 
\[
\begin{split}
(\nabla \otimes \nabla) \Phi=&-\mu \mathrm{e}^{-\mu \phi} \bigl ( \nabla \phi \otimes \nabla \Delta \phi+\nabla \Delta \phi\otimes \nabla \phi \bigr )+\mu^2\mathrm{e}^{-\mu \phi}\bigl ( (\Delta \phi +2)\nabla \phi\otimes \nabla \phi \bigr )\\
&+ \mathrm{e}^{-\mu \phi}\bigl ( (\nabla \otimes \nabla) \Delta \phi- \mu (\Delta \phi + 2)(\nabla \otimes \nabla) \phi \bigr )\,.
\end{split}
\]
Since 
$$
 \nabla \Phi=0\,,\quad  (\nabla \otimes \nabla)\Phi\leq 0\quad \mbox{ at } p_0\,,
$$
we infer 
\begin{equation}\label{eq_nablaPhi=0}
\nabla \Delta \phi=\mu (\Delta \phi+2)\nabla \phi\quad \mbox{ at }p_0\,
\end{equation}
and
\[
(\nabla \otimes \nabla) \Delta\phi\leq \mu(\Delta \phi +2)((\nabla \otimes \nabla)\phi+\mu \nabla \phi \otimes \nabla \phi) \quad 
\mbox{ at }p_0\,.
\]
In particular
\begin{multline*}
\left( \mu(\Delta \phi+2)(\phi_{ij}+\mu \phi_i\phi_j)- \Delta \phi_{ij} \right)^2\\
\leq\left( \mu (\Delta \phi+2)(\phi_{ii}+\mu \phi_i^2)-\Delta \phi_{ii} \right)\left( \mu(\Delta \phi +2)(\phi_{jj}+\mu \phi_j^2)-\Delta \phi_{jj} \right)
\end{multline*}
at $p_0$, for every $ 1\leq i,j\leq n $, and also
\begin{equation*}
\mu (\Delta \phi+2)(\phi_{ii}+\mu \phi_i\phi_i)-\Delta \phi_{ii}\geq 0 \quad \mbox{ at } p_0\,,\quad i=1,\dots, n.
\end{equation*}
Hence the claim follows. 
\end{proof}

\begin{prop}
\label{prop_7}
Let $ F\in C^2(T^5) $ satisfy \eqref{normalization1}. There exists a positive constant $ C_1 $, depending on $ \norma{F}_{C^2} $ only, such that 
\begin{equation}
\label{eq_estimate_laplacian.}
\norma{\Delta \phi}_{C^0}\leq C_1(1+\norma{\phi}_{C^1})\,
\end{equation}
for  any solution $\phi\in C^4_*(T^5) $ to \eqref{eq_equation}.
\end{prop}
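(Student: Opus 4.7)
The plan is to apply the maximum principle to the auxiliary quantity
\[
\Phi = (\Delta \phi + 2)\,\mathrm{e}^{-\mu \phi},
\]
where $\mu>0$ is a parameter to be chosen depending on $\norma{\phi}_{C^1}$. By \eqref{eq_Deltaphi+2>0} this function is smooth and strictly positive on $T^5$, so it attains a maximum at some point $p_0$. Lemma~\ref{new_Luigi} then furnishes, at $p_0$, the inequalities $\eta_{ii}\geq 0$ and $\eta_{ij}^2\leq \eta_{ii}\eta_{jj}$ for the symmetric tensor $\eta_{ij}=\mu(\Delta\phi+2)(\phi_{ij}+\mu\phi_i\phi_j)-(\Delta\phi)_{ij}$.

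The strategy is to derive matching upper and lower bounds for $L(\Delta\phi)(p_0)$, where $L$ denotes the linearised operator of Proposition~\ref{prop_ellipticity}, and compare them. For the upper bound, I combine the positive semi-definiteness of the matrix of $L$ (Proposition~\ref{prop_ellipticity}) with the conditions on $\eta_{ij}$ to obtain $\sum_{i,j}L_{ij}\eta_{ij}\geq 0$ through a short Cauchy--Schwarz/AM--GM manipulation that weights the off-diagonal pairings by $B$. Expanding $\eta_{ij}$ in this inequality and substituting $L(\phi)=2\mathrm{e}^{F}-(\Delta\phi+2)$, which is an immediate consequence of \eqref{eq_equation}, gives
\[
L(\Delta\phi)(p_0)\leq \mu(\Delta\phi+2)\bigl[2\mathrm{e}^{F}-(\Delta\phi+2)+\mu\,Q(\nabla\phi)\bigr],
\]
where $Q(\nabla\phi)$ is the quadratic form of $L$ evaluated on $\nabla\phi$; a short estimate based on Cauchy--Schwarz and the bound $\sum_i \phi_{i5}^2 \leq AB$ yields $Q(\nabla\phi)\leq 2(\Delta\phi+2)\abs{\nabla\phi}^2$.

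For the lower bound, I apply $\partial_{kk}$ to \eqref{eq_equation} and sum over $k$, obtaining an identity of the shape
\[
L(\Delta\phi)=\mathrm{e}^{F}\bigl(\Delta F+\abs{\nabla F}^2\bigr)-2\,\nabla A\cdot\nabla B+2\sum_{k,i}\phi_{i5k}^{2}.
\]
The term $-2\nabla A\cdot\nabla B$ is third order, has no definite sign, and is the main obstacle of the argument. The key observation is that at $p_0$ the first-order condition $\Phi_k=0$ reads $\partial_k A+\phi_{55k}=\mu(\Delta\phi+2)\phi_k$, which eliminates $\partial_k A$ in favour of $\phi_{55k}$ and $\phi_k$. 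One application of $2ab\leq a^2+b^2$ then absorbs the resulting cross-term $-2\mu(\Delta\phi+2)\phi_k\phi_{55k}$ into the positive $\phi_{55k}^2$ contribution, leaving
\[
L(\Delta\phi)(p_0)\geq \mathrm{e}^{F}\bigl(\Delta F+\abs{\nabla F}^2\bigr)-\mu^{2}(\Delta\phi+2)^{2}\abs{\nabla\phi}^{2}.
\]

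Comparing the two bounds and inserting the estimate on $Q(\nabla\phi)$ produces an inequality of the form
\[
\mu(\Delta\phi+2)^{2}\bigl(1-3\mu\abs{\nabla\phi}^{2}\bigr)\leq 2\mu(\Delta\phi+2)\mathrm{e}^{F}+C(\norma{F}_{C^2})
\]
at $p_0$. Choosing $\mu=(1+6\norma{\phi}_{C^1}^{2})^{-1}$ forces $3\mu\abs{\nabla\phi}^{2}\leq 1/2$ and turns this into a quadratic inequality in $(\Delta\phi+2)(p_0)$ that solves to $(\Delta\phi+2)(p_0)\leq C'(1+\norma{\phi}_{C^1})$, with $C'$ depending only on $\norma{F}_{C^2}$. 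Finally, $\Phi(p)\leq \Phi(p_0)$ together with the observation that $\mu\abs{\phi(p)-\phi(p_0)}=O(\mu\norma{\phi}_{C^1})=O(1)$ by our choice of $\mu$ propagates the bound to every $p\in T^5$; since $\Delta\phi+2>0$, this is equivalent to \eqref{eq_estimate_laplacian.}.
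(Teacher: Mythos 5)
Your argument is correct and is essentially the paper's proof: you use the same test function $\Phi=(\Delta\phi+2)\mathrm{e}^{-\mu\phi}$, the same Lemma~\ref{new_Luigi} at the maximum point paired against the ellipticity form of Proposition~\ref{prop_ellipticity} (your inequality $\sum_{i,j}L_{ij}\eta_{ij}\geq 0$ is exactly the paper's substitution $\xi_i=\mathrm{sgn}(\phi_{i5})\sqrt{\eta_{ii}}$, $\xi_5=\sqrt{\eta_{55}}$ combined with $|\eta_{i5}|\leq\sqrt{\eta_{ii}\eta_{55}}$), the same identity obtained by applying $\Delta$ to \eqref{eq_equation}, and the same resulting inequality $\mu(\Delta\phi+2)^2\leq-\Delta\mathrm{e}^F+2\mu(\Delta\phi+2)\mathrm{e}^F+3\mu^2(\Delta\phi+2)^2\abs{\nabla\phi}^2$ at $p_0$. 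The only substantive variation is your choice $\mu=(1+6\norma{\phi}_{C^1}^2)^{-1}$ instead of the paper's $\mu=\epsilon/\max(\Delta\phi+2)$ with $\epsilon=(1+\norma{\nabla\phi}_{C^0})^{-1}$; this makes $\mu\norma{\phi}_{C^0}$ universally bounded, so the propagation factor $\mathrm{e}^{2\mu\norma{\phi}_{C^0}}$ is $O(1)$ and the final bookkeeping (the paper's comparison of $\Delta\phi(p_0)+2$ with $\max(\Delta\phi+2)$) is slightly streamlined, while yielding the same estimate \eqref{eq_estimate_laplacian.}.
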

\begin{proof} 
For starters, 
\begin{equation}
\label{eq_Deltae^F}
\begin{split}
\Delta \mathrm{e}^F=\Delta AB+A\Delta B+2\nabla A\cdot \nabla B-2\sum_{i=1}^{4} 
\left(\abs{\nabla \phi_{i5}}^2+\phi_{i5} \Delta \phi_{i5}\right).
\end{split}
\end{equation}
Let $p_0$ and $\eta_{ij}$ be as in Lemma \ref{new_Luigi} with 
$$
\mu=\frac{\epsilon}{\max(\Delta \phi+2)}
$$
and $\epsilon\in(0,1) $ to be determined later. Then by using \eqref{eq_ellipticity} with 
$$
\xi_i =\mathrm{sgn}(\phi_{i5})\sqrt{\eta_{ii}},\,\, i=1,\dots,4\,,\quad 
\xi_5=\sqrt{\eta_{55}}\,,
$$
we find
\begin{equation*}
\mu(\Delta \phi +2)\left( A(\phi_{55}+\mu \phi_5^2)+B\sum_{i=1}^{4} (\phi_{ii}+\mu \phi_i^2) \right)
-A\underbrace{\Delta \phi_{55}}_{\Delta B}-B\underbrace{\sum_{i=1}^{4}
\Delta \phi_{ii}}_{\Delta A}-2\sum_{i=1}^{4}\phi_{i5}\xi_i\xi_5\geq 0\,.
\end{equation*}
at $ p_0 $. Lemma \ref{new_Luigi} now implies 
\[
\phi_{i5}\xi_i\xi_5=\abs{\phi_{i5}}\sqrt{\eta_{ii}}\sqrt{\eta_{55}}\geq \phi_{i5}\eta_{i5}\,,\,\,\mbox{ at }p_0\,,
\]
i.e. 
\[
\phi_{i5}\xi_i\xi_5\geq  \phi_{i5}\left(\mu(\Delta \phi+2)(\phi_{i5}+\mu \phi_i\phi_5)- \Delta \phi_{i5}\right)\,\, \mbox{at $p_0$}\,.
\]
Therefore we obtain
\begin{multline*}
\mu(\Delta \phi +2)\left( A(\phi_{55}+\mu \phi_5^2)+B\sum_{i=1}^{4} (\phi_{ii}+\mu \phi_i^2) \right)-2\sum_{i=1}^{4}\phi_{i5}\left(\mu(\Delta \phi+2)(\phi_{i5}+\mu \phi_i\phi_5)\right)\\\geq 
A\Delta B+B\Delta A-2\sum_{i=1}^4 \phi_{i5}\Delta \phi_{i5}\,, \quad \mbox{ at }p_0\,.
\end{multline*}

By \eqref{eq_Deltae^F}, and the definition of $ A,B $, at the point $ p_0 $ we have
\begin{align*}
\Delta\mathrm{e}^F\leq& \mu(\Delta \phi +2)\left( A(B-1)+B(A-1) \right)+2\nabla A\cdot \nabla B\\
&+\mu^2(\Delta \phi+2)\left( A\phi_{5}^2+B\sum_{i=1}^{4}\phi_i^2\right)
-2\mu(\Delta \phi+2)\sum_{i=1}^{4}\left(\phi_{i5}^2+\mu\phi_{i5} \phi_i\phi_5\right)\\
=& 2\mu(\Delta \phi+2)\left(AB-\sum_{i=1}^{4}\phi_{i5}^2\right)-\mu(\Delta \phi+2)(A+B)+2\nabla A\cdot \nabla B\\
&+\mu^2(\Delta \phi+2)\left( A\phi_{5}^2+B(\phi_{1}^2+\phi_2^2+\phi_3^2+\phi_4^2)-2\sum_{i=1}^{4}\phi_{i5} \phi_i\phi_5\right)\\
\leq & 2\mu(\Delta \phi+2)\mathrm{e}^F-\mu(\Delta \phi+2)^2+2\nabla A\cdot \nabla B
+2\mu^2(\Delta \phi+2)\left( A\phi_{5}^2+B(\phi_{1}^2+\phi_2^2+\phi_3^2+\phi_4^2)\right).
\end{align*}
Observe that in the last inequality we used \eqref{eq_ellipticity} with $ \xi_i=\phi_i(p_0) $ for $ i=1,\dots,4 $ and $\xi_5=-\phi_5(p_0) $. 

By  \eqref{eq_nablaPhi=0} we then have
\[
\mu^2(\Delta \phi+2)^2\abs{\nabla \phi}^2=\abs{\nabla \Delta \phi}^2=\abs{\nabla(A+B)}^2=\abs{\nabla A}^2+\abs{\nabla B}^2+ 2\nabla A\cdot \nabla B\geq 2\nabla A\cdot \nabla B\,, \quad \mbox{ at }p_0\,,
\]
and with the help of 
\[
A\phi_{5}^2+B(\phi_{1}^2+\phi_2^2+\phi_3^2+\phi_4^2)\leq A\abs{\nabla \phi}^2+B\abs{\nabla \phi}^2=(\Delta \phi+2)\abs{\nabla \phi}^2
\]
we deduce
\begin{equation}
\label{eq_05}
\mu(\Delta \phi(p_0)+2)^2\leq -\Delta \mathrm{e}^{F}(p_0)+ 2\mu(\Delta \phi(p_0)+2)\mathrm{e}^{F(p_0)}+3\mu^2(\Delta \phi(p_0)+2)^2\abs{\nabla \phi(p_0)}^2.
\end{equation}
Let us set 
\begin{align*}
m=\Delta \phi(p_0)+2\,,\,\quad \phi_0=\phi(p_0)\,.
\end{align*}
Since $ p_0 $ is a maximum point for $ \Phi $, clearly
\[
\max \Phi=m\mathrm{e}^{-\mu \phi_0}\,.
\]
From \eqref{eq_05} we obtain
\begin{equation}
\label{eq_06}
\mu m^2\leq \left \lVert \Delta \mathrm{e}^F \right \rVert_{C^0}+2 \mu m \norma{\mathrm{e}^F}_{C^0}+ 3\mu^2m^2 \norma{\nabla \phi}^2_{C^0}.
\end{equation}
Now fix a point $p_1$ where $\Delta \phi+2$ reaches its maximum, and call $ \phi_1=\phi(p_1)$. Then
\begin{equation}
\label{eq_07}
m\leq \max(\Delta \phi +2)=\mathrm{e}^{\mu \phi_1} \Phi \leq m \mathrm{e}^{\mu(\phi_1-\phi_0)}\leq m 
\mathrm{e}^{2\mu\norma{\phi}_{C^0}}.
\end{equation}
By the definition of $ \mu $ and inequality \eqref{eq_Deltaphi+2>0} we have
\[
2\mu =\frac{2}{\max(\Delta \phi+2)}\,\epsilon\leq \frac{1}{\mathrm{e}^{\min(F/2)}}\,\epsilon\leq \mathrm{e}^{-\min(F/2)}\,,
\]
hence by \eqref{eq_07}
\begin{equation*}
\epsilon \exp\left( -\mathrm{e}^{-\min(F/2)}\norma{\phi}_{C^0} \right)\leq \epsilon \mathrm{e}^{-2\mu\norma{\phi}_{C^0}}=\mu \max(\Delta \phi+2)\mathrm{e}^{-2\mu\norma{\phi}_{C^0}}\leq \mu m
\end{equation*}
and also
\begin{equation*}
\exp\left( -\mathrm{e}^{-\min(F/2)}\norma{\phi}_{C^0} \right)\max(\Delta \phi+2)\leq \mathrm{e}^{-2\mu\norma{\phi}_{C^0}}\max(\Delta \phi+2)\leq m\,.
\end{equation*}
Next we multiply the last two inequalities and use \eqref{eq_06}, recalling that $ \mu m\leq \epsilon $, to the effect that
\begin{equation*}
\epsilon \exp\left( -2\mathrm{e}^{-\min(F/2)}\norma{\phi}_{C^0} \right)\max(\Delta \phi+2)\leq   \left \lVert \Delta \mathrm{e}^F \right \rVert_{C^0}+2 \epsilon \norma{\mathrm{e}^F}_{C^0}+ 3\epsilon^2 \norma{\nabla \phi}^2_{C^0}\,.
\end{equation*}
Put otherwise, 
\begin{equation*}
\norma{\Delta \phi}_{C^0}\leq \exp\left( 2\mathrm{e}^{-\min(F/2)}\norma{\phi}_{C^0} \right)\left(  \frac{1}{\epsilon} \left \lVert \Delta \mathrm{e}^F \right \rVert_{C^0}+2 \norma{\mathrm{e}^F}_{C^0}+ 3\epsilon \norma{\nabla \phi}^2_{C^0}  \right),
\end{equation*}
and by choosing 
\[
\epsilon=\frac{1}{1+\norma{\nabla \phi}_{C^0}}
\]
the claim is straighforward.
\end{proof}

The next theorem will provide us with an a priori $C^1$-estimate for $\phi$. Together with Proposition \ref{prop_7} it will give an a priori $C^0$-bound for $\Delta \varphi$.  

\begin{teor}
For all solutions $ \phi\in C^4_*(T^5) $ of equation \eqref{eq_equation} with $ F\in C^2(T^5) $ satisfying \eqref{normalization1} there exists a positive constant $ C_2 $, depending on $ \norma{F}_{C^2} $ only, such that
\begin{equation}
\label{eq_C1-estimate}
\norma{\phi}_{C^1}\leq C_2.
\end{equation}
\end{teor}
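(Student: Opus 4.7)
My plan is to close the loop opened by Proposition~\ref{prop_7}: since the Laplacian bound proved there depends on $\norma{\phi}_{C^1}$, I need to extract a $C^1$-estimate using only the $C^0$-estimate \eqref{eq_C0-estimate} from Section~\ref{C^0}. The idea is to pass from $C^0$-control of $\Delta \phi$ to $W^{2,p}$-control of $\phi$ via elliptic regularity, and then apply a Gagliardo--Nirenberg interpolation whose exponent is strictly less than one, so that $\norma{\nabla \phi}_{C^0}$ can be absorbed.

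First, I would observe that $\phi\in C^4_*(T^5)$ has mean zero on $T^5$, so the flat-torus Calder\'on--Zygmund inequality (easily proved via Fourier series) gives, for any $p\in (1,\infty)$,
$$
\norma{D^2\phi}_{L^p(T^5)} \leq C_p \norma{\Delta \phi}_{L^p(T^5)} \leq C_p\norma{\Delta \phi}_{C^0(T^5)}.
$$
Combined with Proposition~\ref{prop_7}, this would yield
$$
\norma{D^2\phi}_{L^p(T^5)} \leq C_p C_1 \bigl(1+\norma{\phi}_{C^1}\bigr).
$$

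Next I would fix some $p>5$ and invoke the Gagliardo--Nirenberg interpolation inequality on $T^5$
$$
\norma{\nabla \phi}_{C^0(T^5)} \leq C \norma{D^2\phi}_{L^p(T^5)}^{a} \norma{\phi}_{C^0(T^5)}^{1-a},\qquad a=\frac{p}{2p-5}\in (1/2,1),
$$
the value of $a$ being forced by scaling. Plugging in the $C^0$-bound $\norma{\phi}_{C^0}\leq C_0$ from \eqref{eq_C0-estimate}, the chain of inequalities becomes
$$
\norma{\nabla \phi}_{C^0} \leq C \bigl(1+\norma{\phi}_{C^1}\bigr)^{a},
$$
with a constant depending only on $\norma{F}_{C^2}$. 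Since $\norma{\phi}_{C^1}\leq \norma{\phi}_{C^0}+\norma{\nabla \phi}_{C^0}$ and $a<1$, an elementary absorption argument (the inequality $x\leq C'(1+x)^{a}$ with $a<1$ implies $x\leq C''$) produces the desired bound $\norma{\phi}_{C^1}\leq C_2$.

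The crux of the argument is arranging for the Gagliardo--Nirenberg exponent to be strictly smaller than one: this forces $p$ to be strictly larger than the dimension $n=5$ of the base, which is precisely why it is essential to use the $L^p$ Calder\'on--Zygmund bound rather than a naive $L^\infty$-bound on $D^2\phi$ (the latter is not available from the Laplacian control alone). Once $a<1$ is secured the absorption is automatic, and all constants track solely $\norma{F}_{C^2}$.
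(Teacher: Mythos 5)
Your proposal is correct, and it reaches the $C^1$-bound by a genuinely different interpolation route than the paper. The paper combines the Morrey embedding $W^{2,p}(T^5)\hookrightarrow C^{1,\alpha}(T^5)$ with the elliptic $L^p$-estimate $\norma{\phi}_{W^{2,p}}\leq C(\norma{\phi}_{L^p}+\norma{\Delta\phi}_{L^p})$, producing the \emph{linear} bound $\norma{\phi}_{C^{1,\alpha}}\leq C(C_0+C_1(1+\norma{\phi}_{C^1}))$, and then absorbs via the H\"older interpolation $\norma{\phi}_{C^1}\leq P_\epsilon\norma{\phi}_{C^0}+\epsilon\norma{\phi}_{C^{1,\alpha}}$ by choosing $\epsilon$ small. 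You instead pass directly from $\norma{\Delta\phi}_{C^0}$ to $\norma{D^2\phi}_{L^p}$ via Calder\'on--Zygmund and then invoke Gagliardo--Nirenberg with exponent $a=p/(2p-5)<1$, so the bound $\norma{\nabla\phi}_{C^0}\lesssim(1+\norma{\phi}_{C^1})^a$ is already sublinear and the absorption is automatic, with no auxiliary $\epsilon$. The two arguments are morally the same (both exploit multiplicative interpolation between zeroth and second order), but yours is somewhat more economical since it never passes through H\"older spaces. Two small points worth flagging. First, on the compact torus the Gagliardo--Nirenberg inequality in the pure multiplicative form $\norma{\nabla\phi}_{C^0}\leq C\norma{D^2\phi}_{L^p}^a\norma{\phi}_{C^0}^{1-a}$ technically requires an additive lower-order term $C\norma{\phi}_{C^0}$; this is harmless in your argument since $\norma{\phi}_{C^0}\leq C_0$ is already in hand, but you should include it for rigor. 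Second, you correctly identify $p>5$ as the threshold imposed by the dimension of the base, whereas the paper's stated choice $p=3/(1-\alpha)>3$ appears to be a slip carried over from the $T^3$ setting of \cite{Buzano-Fino-Vezzoni (2015)}: on $T^5$ Morrey demands $p=5/(1-\alpha)>5$, which is precisely the same condition your exponent computation produces.
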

\begin{proof}
Fix $ 0<\alpha<1 $ and $ p=\frac{3}{1-\alpha}>3 $. Morrey's inequality says 
\[
\norma{\phi}_{C^{1,\alpha}}\leq C \norma{\phi}_{W^{2,p}}
\]
for some positive constant $ C $ depending only on $ \alpha $. 
Elliptic $ L^p $-estimates for the Laplacian also generate another constant $ C' $,  still depending on $ \alpha $ only, such that
\[
\norma{\phi}_{W^{2,p}}\leq C'\left( \norma{\phi}_{L^p}+\norma{\Delta u}_{L^p} \right)\,.
\]
If $ \phi \in C^2(T^5) $, the $ C^0 $-estimate \eqref{eq_C0-estimate} for $ \phi $ and  bound \eqref{eq_estimate_laplacian.} for $ \Delta \phi $ imply 
\[
\norma{\phi}_{L^p}+\norma{\Delta \phi}_{L^p}\leq \norma{\phi}_{C_0}+\norma{\Delta \phi}_{C^0}\leq C_0+C_1(1+\norma{\phi}_{C^1})\,.
\]

Using standard interpolation theory (see \cite[section 6.8]{Gilbarg-Trudinger (1983)}), for any $ \epsilon>0 $ there is a constant $ P_\epsilon>0 $ such that
\[
\norma{\phi}_{C^1}\leq P_\epsilon \norma{\phi}_{C^0}+\epsilon\norma{\phi}_{C^{1,\alpha}}, \qquad \text{for every } \phi\in C^{1,\alpha}(T^5)\,.
\]
Putting all this together, we obtain 
\[
\norma{\phi}_{C^1}\leq P_\epsilon C_0+\epsilon K_0\left( C_0+C_1(1+\norma{\phi}_{C^1}) \right)= P_\epsilon C_0+\epsilon K_0(C_0+C_1)+\epsilon K_0C_1\norma{\phi}_{C^1}\,, 
\]
for some positive constant $ K_0 $, again depending on $ \alpha $ only. 
This produces \eqref{eq_C1-estimate} once we choose
\[
\epsilon<\frac{1}{K_0C_1}\,. \qedhere
\]
\end{proof}

\begin{cor}\label{corr??}
Assume that $ F\in C^2(T^5) $ satisfies \eqref{normalization1} and let $ \phi\in C^4_*(T^5) $ be a solution to \eqref{eq_equation}. Then there exists a positive constant $ C_3 $, depending on $ \norma{F}_{C^2} $ only, such that
\begin{equation*}
\norma{\Delta \phi}_{C^0}\leq C_3\,.
\end{equation*}
\end{cor}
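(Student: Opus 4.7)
The plan is to derive the corollary as a straightforward combination of the two preceding results in the section. Proposition \ref{prop_7} gives the estimate
\[
\norma{\Delta \phi}_{C^0}\leq C_1(1+\norma{\phi}_{C^1})\,,
\]
where $C_1$ depends only on $\norma{F}_{C^2}$, while the $C^1$-estimate \eqref{eq_C1-estimate} produces a constant $C_2$, again depending only on $\norma{F}_{C^2}$, with $\norma{\phi}_{C^1}\leq C_2$.

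Substituting the second inequality into the first immediately yields
\[
\norma{\Delta \phi}_{C^0}\leq C_1(1+C_2)\,,
\]
so the claim holds with $C_3:=C_1(1+C_2)$, which depends only on $\norma{F}_{C^2}$. There is no genuine obstacle here: the whole content of the argument is that the $C^1$-bound on $\phi$ absorbs the term $\norma{\phi}_{C^1}$ appearing on the right-hand side of Proposition \ref{prop_7}, turning the conditional Laplacian estimate into an unconditional one.
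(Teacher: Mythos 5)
Your proof is correct and is exactly the argument the paper intends: the corollary is stated without proof precisely because it follows by substituting the $C^1$-bound \eqref{eq_C1-estimate} into the estimate \eqref{eq_estimate_laplacian.} of Proposition \ref{prop_7}, giving $C_3=C_1(1+C_2)$ depending only on $\norma{F}_{C^2}$.
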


  
\section{$ C^{2,\alpha} $-estimate}\label{C^2,alpha}

The $C^{2,\alpha} $-estimate for our equation \eqref{eq_equation} can be deduced directly from the general result of Alesker which we state next. It holds for compact hypercomplex manifolds that are locally flat, in the sense that they are locally isomorphic to $\mathbb H^n$.  

\begin{teor}[Theorem 4.1 in \cite{Alesker (2013)}]\label{locflatth}
Let  $M$ be a $4n$-dimensional compact HKT manifold whose underlying hypercomplex structure is locally flat. Suppose $ \phi\in C^2(M) $ is a solution to the quaternionic Monge-Amp\`ere equation \eqref{eq_quaternionic_Calabi_conjecture}.  
Then
$$
\norma{\phi}_{C^{2,\alpha}}\leq C\,
$$
for some $ \alpha\in (0,1)$ and a positive constant $ C$, both depending on $M$, $\Omega,$  $\norma{F}_{C^2},$  $ \norma{\phi}_{C_0}$  and $\|\tilde \Delta \phi\|_{C^0} $,  where 
$$
\tilde \Delta \phi=	\frac{\partial \partial_{J_2}\phi\wedge \Omega^{n-1} }{\Omega^{n}}
$$
and $\Omega$ is the HKT form.
\end{teor}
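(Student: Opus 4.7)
The result is essentially cited as a black box from \cite{Alesker (2013)}, so the task is to outline the proof strategy used there. The plan has three main ingredients: (i) passing from the global equation to a local real-variable PDE, (ii) upgrading the trace bound $\|\tilde\Delta\phi\|_{C^0}$ to a full $C^2$ bound via the Monge-Amp\`ere structure, and (iii) invoking an Evans-Krylov-type theorem for concave elliptic operators.

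\emph{Step 1: Localization.} Because $(M,J_1,J_2,J_3)$ is locally flat, there is a finite atlas $\{U_\alpha,\psi_\alpha\}$ with $\psi_\alpha\colon U_\alpha\to V_\alpha\subset \mathbb H^n$ intertwining the hypercomplex structure of $M$ with the standard one on $\mathbb H^n$. In each such chart, $\partial\partial_{J_2}\phi$ is identified with a hyperhermitian matrix of second derivatives $Q(\phi)=(Q_{ij}(\phi))$, and the HKT form $\Omega$ corresponds to a smooth, positive-definite hyperhermitian background $H$. The equation \eqref{eq_quaternionic_Calabi_conjecture} reduces locally to
\[
\det\nolimits_{\mathbb H}\!\bigl(H+Q(\phi)\bigr)=e^{F}\det\nolimits_{\mathbb H}\!H,
\]
with smooth coefficients (depending on $\Omega$) and data of controlled size.

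\emph{Step 2: Full $C^2$-estimate.} The assumption $\|\tilde\Delta\phi\|_{C^0}\leq C$ translates, in these charts, into a uniform bound on the trace of $H+Q(\phi)$. Since $\tilde\Omega=\Omega+\partial\partial_{J_2}\phi$ is an HKT form, $H+Q(\phi)$ is positive definite. Combining positivity, the trace upper bound, and the determinant equation $\det_{\mathbb H}(H+Q(\phi))=e^F\det_{\mathbb H} H\geq c_0>0$ (with $c_0$ depending on $\|F\|_{C^0}$ and $\Omega$), the arithmetic-geometric mean inequality forces each quaternionic eigenvalue of $H+Q(\phi)$ to be bounded away from both $0$ and $\infty$. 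This yields a uniform two-sided bound on all entries of $Q(\phi)$, hence a $C^2$-bound on $\phi$ in each chart and, by finiteness of the atlas, on $M$:
\[
\|\phi\|_{C^2(M)}\leq C'\bigl(M,\Omega,\|F\|_{C^0},\|\phi\|_{C^0},\|\tilde\Delta\phi\|_{C^0}\bigr).
\]

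\emph{Step 3: $C^{2,\alpha}$ via Evans--Krylov.} Take logarithms in each chart to obtain
\[
\log\det\nolimits_{\mathbb H}\!\bigl(H+Q(\phi)\bigr)=F+\log\det\nolimits_{\mathbb H}\!H.
\]
The operator $\mathcal F(Q)=\log\det_{\mathbb H}(H+Q)$ is \emph{concave} on the cone of hyperhermitian matrices making $H+Q$ positive definite, precisely as in the complex case, because $\det_{\mathbb H}^{1/(2n)}$ is concave on positive hyperhermitian matrices. From Step 2 the linearization is uniformly elliptic with ellipticity constants under control. The Evans--Krylov theorem for concave, uniformly elliptic, fully nonlinear equations then delivers an interior $C^{2,\alpha}$-estimate in each chart depending only on the ellipticity constants and $\|F\|_{C^2}$; here one uses $F\in C^2$ (which in particular gives $F\in C^{0,1}$ as required by Evans--Krylov, with the extra regularity absorbed into the constants). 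Covering $M$ by finitely many such charts together with slightly larger ambient charts and taking the maximum of the resulting estimates gives the global $C^{2,\alpha}$-bound.

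\emph{Main obstacle.} The delicate point is Step 3: one needs an Evans--Krylov theorem adapted to the \emph{quaternionic} setting, i.e.\ concavity of $\log\det_{\mathbb H}$ on positive hyperhermitian matrices and applicability of the Krylov--Safonov / Evans--Krylov machinery to the real PDE that results after expanding $Q(\phi)$ in real second derivatives. Once this has been established (as in \cite{Alesker (2013)}), Steps 1--2 are formal, and the final constant depends only on the data listed in the statement.
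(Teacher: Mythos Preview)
The paper does not prove this theorem at all: it is quoted verbatim as Theorem~4.1 of \cite{Alesker (2013)} and used as a black box to deduce Proposition~\ref{prop??}. You correctly recognize this at the outset, and your three-step outline (localization via the locally flat atlas, upgrading the trace bound to a full $C^2$-bound by the AM--GM argument on quaternionic eigenvalues, then Evans--Krylov applied to the concave operator $\log\det_{\mathbb H}$) is an accurate summary of Alesker's strategy in \cite{Alesker (2013)}. There is nothing to compare against in the present paper, and your sketch is sound.
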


The HKT structures we are considering on $M_r$ are flat for the Obata connection \cite[Proposition 6.1]{Dotti-Fino (2000)}. Hence the underlying hypercomplex structure is  locally flat. Moreover, for $T^3$-invariant functions the operator $\tilde \Delta$ acts as a multiple of the Laplace operator, hence Theorem \ref{locflatth} and Corollary \ref{corr??} imply 

\begin{prop}\label{prop??}
Assume $ F\in C^2(T^5) $ satisfies \eqref{normalization1}. For every solution $ \phi\in C^4_*(T^5) $ to equation \eqref{eq_equation} there exist $ \alpha \in (0,1) $ and a positive constant $ C_4 $, depending on $ \norma{F}_{C^2}, \norma{\phi}_{C^0} $ only, such that
\begin{equation*}
\norma{\phi}_{C^{2,\alpha}}\leq C_4\,.
\end{equation*}
\end{prop}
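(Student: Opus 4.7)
The plan is to obtain Proposition \ref{prop??} as a direct consequence of Alesker's Theorem \ref{locflatth} applied to our nilmanifolds $M_r$, once two preliminary observations are made. First, I would recall that by \cite[Proposition 6.1]{Dotti-Fino (2000)}, the Obata connection of the hypercomplex structure $(J_1,J_2,J_3)$ on each $N_r$ is flat, so that the induced hypercomplex structure on the compact quotient $M_r=\Gamma_r\backslash N_r$ is locally flat in the sense required by Theorem \ref{locflatth}. This takes care of the geometric hypothesis.

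Second, I would compute $\tilde\Delta\phi$ explicitly in our setting and show that, on $T^3$-invariant potentials, it is proportional to the ordinary Laplacian on the base $T^5$. Using the formula for $\partial\partial_{J_2}\phi$ derived in Section \ref{pre} together with $\Omega=2(\zeta^{12}+\zeta^{34})$, one gets
\[
\partial\partial_{J_2}\phi \wedge \Omega = 2\bigl(Z_1\bar Z_1(\phi)+Z_2\bar Z_2(\phi)+Z_3\bar Z_3(\phi)+Z_4\bar Z_4(\phi)\bigr)\zeta^{1234},
\]
and $\Omega^2 = 8\,\zeta^{1234}$, so $\tilde\Delta\phi$ is, up to a universal factor, the sum $\sum_r Z_r\bar Z_r(\phi)$. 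For $T^3$-invariant $\phi$ (depending only on $x^1,\dots,x^5$) this sum reduces to a constant multiple of $\phi_{11}+\phi_{22}+\phi_{33}+\phi_{44}+\phi_{55}=\Delta\phi$, so that $\|\tilde\Delta\phi\|_{C^0}$ is controlled by $\|\Delta\phi\|_{C^0}$.

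With these two facts in hand, the rest is assembly: Corollary \ref{corr??} bounds $\|\Delta\phi\|_{C^0}$, hence $\|\tilde\Delta\phi\|_{C^0}$, in terms of $\|F\|_{C^2}$ alone, while $\|\phi\|_{C^0}$ enters as an assumed quantity in the statement. Plugging these bounds into Theorem \ref{locflatth} yields $\alpha\in(0,1)$ and a constant $C_4$, depending only on $\|F\|_{C^2}$ and $\|\phi\|_{C^0}$ (the dependence on $M_r$ and $\Omega$ being absorbed into the structural constants), such that $\|\phi\|_{C^{2,\alpha}}\leq C_4$.

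The only genuinely non-routine point is the verification that $\tilde\Delta$ really collapses to the standard Laplacian on $T^3$-invariant functions; this is the step where the specific structure equations of the $N_r$ and the Abelianness of $J_1$ come in, since the $Z_r\bar Z_r$ terms involving the central directions $e_5,\dots,e_8$ must either vanish on $T^3$-invariant data or combine into $\phi_{55}$. Everything else is bookkeeping, so there is no serious obstacle once Alesker's theorem is invoked.
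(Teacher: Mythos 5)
Your proposal matches the paper's argument exactly: the paper invokes \cite[Proposition 6.1]{Dotti-Fino (2000)} for flatness of the Obata connection and hence local flatness of the hypercomplex structure, observes that $\tilde\Delta$ reduces to a multiple of the ordinary Laplacian on $T^3$-invariant data, and then combines Theorem \ref{locflatth} with Corollary \ref{corr??}. Your explicit computation of $\partial\partial_{J_2}\phi\wedge\Omega$ and $\Omega^2$ simply supplies the detail the paper leaves to the reader for the $\tilde\Delta$ identification, so this is the same proof with slightly more of the bookkeeping spelled out.
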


\section{Proof of Theorem \ref{main}} \label{Proof}

In this section we shall use the previously established a priori estimates in order to prove the following result. This will then imply Theorem \ref{main}.

\begin{teor}
Let $ F\in C^\infty(T^5)$ satisfy \eqref{normalization1}. Then equation \eqref{eq_equation} admits a solution $ \phi \in C^\infty_*(T^5) $.
\end{teor}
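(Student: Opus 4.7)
The strategy is the continuity method. For $t\in[0,1]$ set $c_t=-\log\int_{T^5}\mathrm{e}^{tF}$, so that $tF+c_t$ satisfies \eqref{normalization1} for every $t$; in particular the hypothesis on $F$ gives $c_0=c_1=0$. Consider the one-parameter family of equations
\[
(\phi_{11}+\phi_{22}+\phi_{33}+\phi_{44}+1)(\phi_{55}+1)-\phi_{15}^2-\phi_{25}^2-\phi_{35}^2-\phi_{45}^2=\mathrm{e}^{tF+c_t},
\]
fix $\alpha\in(0,1)$, and let
\[
S=\left\{t\in[0,1]\colon\text{this equation admits a solution }\phi_t\in C^{2,\alpha}_*(T^5)\right\}.
\]
The trivial solution $\phi_0\equiv 0$ shows $0\in S$, and the goal is to prove $S$ is both open and closed in $[0,1]$, which forces $1\in S$.

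For openness, fix $t_0\in S$ with solution $\phi_{t_0}$ and linearize the operator $\phi\mapsto \log[\text{LHS of the equation}]$ at $\phi_{t_0}$. The resulting operator $L$ has principal symbol proportional, up to a positive factor, to the matrix $P(\phi_{t_0})$ of Proposition \ref{prop_ellipticity}, and is therefore strictly elliptic on $T^5$. Because the LHS of the equation is invariant under $\phi\mapsto\phi+\mathrm{const}$, one has $L(1)=0$; the strong maximum principle then identifies $\ker L$ with the constants, so $L$ is Fredholm of index zero with one-dimensional kernel and cokernel. Treating the normalizing constant $c_t$ as an additional real parameter to match this cokernel, the implicit function theorem applied on $C^{2,\alpha}_*(T^5)\times\mathbb{R}$ produces a $C^{2,\alpha}_*$-solution of the equation for every $t$ in a neighborhood of $t_0$.

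For closedness, pick $t_k\in S$ with $t_k\to t_\infty$ and corresponding solutions $\phi_k\in C^{2,\alpha}_*(T^5)$. Since $t\mapsto tF+c_t$ has uniformly bounded $C^2$-norm on $[0,1]$, the a priori estimates of Sections \ref{C^0}, \ref{Laplacian} and \ref{C^2,alpha}, culminating in Proposition \ref{prop??}, provide a uniform bound on $\|\phi_k\|_{C^{2,\alpha}}$. By Ascoli-Arzel\`a a subsequence converges in $C^{2,\beta}$ for every $\beta<\alpha$ to some $\phi_\infty\in C^{2,\alpha}_*(T^5)$ which solves the equation at $t=t_\infty$, so $t_\infty\in S$.

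Having obtained a $C^{2,\alpha}_*(T^5)$-solution at $t=1$, I would upgrade it to $C^\infty_*(T^5)$ by a standard bootstrap: the equation is uniformly elliptic along the solution (Proposition \ref{prop_ellipticity}) and has $C^\infty$ right-hand side, so differentiating it and applying Schauder estimates iteratively gives $\phi\in C^{k,\alpha}(T^5)$ for every $k$. All the substantial analysis has already been performed in the preceding sections; the only delicate bookkeeping in what remains is in the openness step, where the one-dimensional cokernel of the linearized operator must be compensated exactly by the real degree of freedom in $c_t$. Once this is in place, the continuity method runs along entirely standard lines.
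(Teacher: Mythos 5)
Your overall strategy coincides with the paper's: a continuity path, closedness via the uniform $C^{2,\alpha}$ bound of Proposition \ref{prop??} together with Ascoli--Arzel\`a and elliptic regularity, openness via a linearization argument, and a final bootstrap. The genuine gap is in the openness step, precisely at the point you yourself call ``delicate bookkeeping'' and then do not carry out. Knowing that $\ker L=\R\cdot 1$ and that $L$ has index zero gives a one-dimensional cokernel, but for the map $(u,\beta)\mapsto Lu-\beta$ on $C^{2,\alpha}_*(T^5)\times\R$ to be an isomorphism you must show that the constant function $1$ does \emph{not} lie in the range of $L$, i.e.\ that the generator $\theta$ of $\ker L^*$ has $\int_{T^5}\theta\neq 0$ (the usual route is positivity of $\theta$, via Krein--Rutman or the maximum principle for the adjoint). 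Without this, the implicit function theorem step is unjustified. Moreover, even granting it, your IFT produces, for $t$ near $t_0$, a solution of the equation with right-hand side $\mathrm{e}^{tF+b}$ for \emph{some} $b$ near $c_{t_0}$; to conclude $t\in S$ as you defined it you still must check $b=c_t$, which you never do.

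Both points are repaired by an observation that also explains why the paper's bookkeeping is simpler: integrating by parts on $T^5$, for every periodic $\phi$ one has
\[
\int_{T^5}\Bigl[(\phi_{11}+\phi_{22}+\phi_{33}+\phi_{44}+1)(\phi_{55}+1)-\sum_{i=1}^4\phi_{i5}^2\Bigr]=1\,,
\]
since $\int\phi_{ii}\phi_{55}=\int\phi_{i5}^2$ and pure second derivatives have zero mean. This identity forces $b=c_t$ a posteriori, and it shows the nonlinear operator sends $C^{2,\alpha}_*(T^5)$ into zero-mean functions, so one can drop the extra real parameter altogether and argue, as the paper does, that the linearization is bijective from $C^{2,\alpha}_*(T^5)$ to $C^{0,\alpha}_*(T^5)$: injective by the strong maximum principle, surjective by closed range plus the method of continuity. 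The paper also avoids your constant $c_t$ from the start by choosing the path $F_t=\log(1-t+t\mathrm{e}^F)$, which satisfies \eqref{normalization1} for every $t$ by construction. Finally, a minor point: the a priori estimates of Sections \ref{Laplacian}--\ref{C^2,alpha} are stated for $C^4_*$ solutions, so in the closedness step you should first bootstrap each $\phi_k$ to $C^\infty$ (exactly as you do at $t=1$) before invoking them.
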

\begin{proof}
For $ t\in [0,1] $ we define
\[
F_t=\log(1-t+t\mathrm{e}^F)
\]
and set 
\[
S_t=\left\{ \phi \in C^\infty_*(T^5) \mid  (\phi_{11}+\phi_{22}+\phi_{33}+\phi_{44}+1)(\phi_{55}+1)-\phi_{15}^2-\phi_{25}^2-\phi_{35}^2-\phi_{45}^2=\mathrm{e}^{F_t} \right\},
\]
and $ S=\bigcup_{t\in [0,1]} S_t $. Clearly $ 0\in S_0 $, and $ S_1 $ is the set of smooth solutions of \eqref{eq_equation}. We thus need to show that $ S_1\neq \emptyset $. For any $ t\in [0,1]$ the map  $F_t$ satisfies \eqref{normalization1} and 
\[
\max_{t\in [0,1]}\norma{F_t}_{C^2}<\infty\,.
\]
 Proposition \ref{prop??} therefore implies there exists $ \alpha\in (0,1) $ such that
\begin{equation}
\label{eq_phi_k_bounded}
\sup_{\phi\in S} \norma{\phi}_{C^{2,\alpha}}<\infty\,.
\end{equation}
Let 
\[
\tau=\sup\{ t\in [0,1] \mid S_t\neq \emptyset \}\,.
\]
We claim that $S_{\tau}\neq \emptyset $ and $ \tau=1. $

\begin{enumerate}
\item[$S_{\tau}\neq \emptyset $.] Let $ \{t_k\}\subseteq [0,1] $ be an increasing sequence converging to $\tau$, and for any $k\in \mathbb N $ we fix $\phi_k\in S_{t_k}$. Condition \eqref{eq_phi_k_bounded} implies that $\{\phi_k\}$ is a sequence in  $C^{2,\alpha}_*(T^5) $, so by the Ascoli-Arzel\`{a} Theorem there exists a subsequence $\{\phi_{k_j}\}$ converging to some $ \psi $ in $ C^{2,\alpha/2}_*(T^5) $. The function $ \psi $ satisfies
\[
(\psi_{11}+\psi_{22}+\psi_{33}+\psi_{44}+1)(\psi_{55}+1)-\psi_{15}^2-\psi_{25}^2-\psi_{35}^2-\psi_{45}^2=\mathrm{e}^{F_\tau}\,.
\]
In view of Proposition \ref{prop_ellipticity}, equation \eqref{eq_equation} is elliptic,  and elliptic regularity (see e.g. \cite[Theorem 4.8, Chapter 14]{Taylor}) implies that $\psi$ is in fact $C^{\infty}$. Therefore $S_\tau \neq \emptyset $, as required.

\vspace{0.2cm}
\item[$ \tau=1$.] Assume, by contradiction, that $ \tau<1 $, and consider the non-linear operator 
$$
T\colon C^{2,\alpha}_*(T^5)\times[0,1] \to C^{0,\alpha}_*(T^5) 
$$ 
defined by 
\[
T(\phi,t)=(\phi_{11}+\phi_{22}+\phi_{33}+\phi_{44}+1)(\phi_{55}+1)-\phi_{15}^2-\phi_{25}^2-\phi_{35}^2-\phi_{45}^2-\mathrm{e}^{F_t}\,.
\]
Since $S_\tau\neq\emptyset $, there exists $\psi \in C^{\infty}_{*}(T^5)$ such that $ T(\psi,\tau)=0 $. Let $L\colon C^{2,\alpha}_*(T^5)\to C^{0,\alpha}_*(T^5) $ be the first variation of $T$ with respect to the first variable. Then 
\[
Lu=Au_{55}+B(u_{11}+u_{22}+u_{33}+u_{44})-2C_1u_{15}-2C_2u_{25}-2C_3u_{35}-2C_4u_{45}
\]
where 
\begin{align*}
A=(\psi_{11}+\psi_{22}+\psi_{33}+\psi_{44}+1)\,, && B=(\psi_{55}+1)\,, && C_i=\psi_{i5}\,,
\end{align*}
which implies that $L$ is elliptic since $ \psi \in S_\tau $. 
The strong maximum principle guarantees $L$ is injective because $ L\phi=0 $ forces $ \phi$  to be constant. Furthermore, ellipticity implies that $ L $ has closed range, and  Schauder Theory together with the method of continuity (see \cite[Theorem 5.2]{Gilbarg-Trudinger (1983)}) ensures $L$ is surjective. Hence by the Implicit Function Theorem there exists $ \epsilon >0 $ such that for every fixed $ t\in (\tau-\epsilon,\tau+\epsilon) $, equation
\[
T(\phi,t)=0
\]
has a solution $\phi$, which is additionally smooth by elliptic regularity. Therefore $ S_t\neq \emptyset $ for every $ t\in (\tau,\tau+\epsilon) $,  which contradicts the maximality of $\tau$.   \qedhere
\end{enumerate}
\end{proof}

\section{Further Developments} \label{Developments}
As a follow-up to the present work we plan to study the quaternionic Monge-Amp\`ere equation on other homogeneous spaces. 

The manifold $M_2$, for instance,  can be regarded as a $T^2$-bundle over $T^6$, so  it is quite natural to wonder whether Theorem \ref{main} might extend to $T^2$-invariant functions (instead of $T^3$-invariant). We shall next describe this setup for $M_2$ and point out the differences from the $T^3$-invariant setting considered in Theorem \ref{main}.

From \eqref{general} the quaternionic Monge-Amp\`ere equation \eqref{eq_quaternionic_Calabi_conjecture} on 
$(M_2,J_1,J_2,J_3,g)$ reduces to the following PDE on the $6$-dimensional 
base $T^6$ when the map $F$ is $T^2$-invariant
\begin{multline}\label{equationM2}
(\phi_{11}+\phi_{22}+\phi_{33}+\phi_{44}+1)(\phi_{55}+\phi_{66}+1)\\-(\phi_{35}-\phi_{26})^2-(\phi_{45}-\phi_{16})^2-
(\phi_{46}+\phi_{15})^2-(\phi_{36}+\phi_{25})^2={\rm e}^F\,,
\end{multline}
where $\varphi$ is an unknown function in $C^{\infty}(T^6)$. 
By calling 
\begin{align*}
A=\phi_{11}+\phi_{22}+\phi_{33}+\phi_{44}+1\,, && B=\phi_{55}+\phi_{66}+1
\end{align*}
and
\begin{align*}
a_1=\phi_{35}-\phi_{26}\,, && a_2=\phi_{45}-\phi_{16}\,, && a_3=\phi_{46}+\phi_{15}\,, && a_4=\phi_{36}+\phi_{25}\,,
\end{align*}
we may rewrite \eqref{equationM2} as 
\begin{equation}\label{eqn:107}
AB-\sum_{i=1}^4 a_i^2=\mathrm e^F\,.
\end{equation}
The above is elliptic and 
\begin{multline}\label{eqn:109}
B(\xi_1^2+\xi_2^2+\xi_3^2+\xi_4^2)+A(\xi_5^2+\xi_6^2)
-2 a_1(\xi_3\xi_5-\xi_2\xi_6)-
2 a_2(\xi_4\xi_5-\xi_1\xi_6)
\\-2 a_3(\xi_4\xi_6+\xi_1\xi_5)-2 a_4 (\xi_3\xi_6+\xi_2\xi_5)> 0,
\end{multline}
for every $\xi\in \R^6$, $\xi \neq 0$. 

In order to show that \eqref{equationM2} can be solved, we need only prove an a priori $C^0$-estimate for the Laplacian of the solutions to \eqref{equationM2}. The natural approach consists in adapting the proof of Proposition \ref{prop_7} by mixing 
Lemma \ref{new_Luigi} with the ellipticity of the equation. 
In this case, however, it seems that condition \eqref{eqn:109} should be replaced with a stronger assumption, one implied by the estimate  
\begin{equation}\label{nuovanecessaria}
2(\abs{a_2a_3}+\abs{a_1a_4})<{\rm e}^F\,.
\end{equation}

Applying the Laplacian operator to both sides of \eqref{eqn:107} we get 
\begin{equation*}
B\Delta A+A\Delta B+2\nabla A{\cdot}\nabla B-2\sum_{k=1}^{4} \left(\abs{\nabla a_k}^2+a_k \Delta a_k\right)
=\Delta \mathrm{e}^F,
\end{equation*}
which readily implies 
\begin{equation}
\label{eq_08}
\Delta \mathrm{e}^F\leq B\Delta A+A\Delta B+2\nabla A{\cdot}\nabla B-2\sum_{k=1}^{4} a_k \Delta a_k\,.
\end{equation}
Let $p_0$ be a maximum point for $ (\Delta \phi +2)\mathrm{e}^{-\mu \phi}$, as in Lemma \ref{new_Luigi}, and 
$$
\mu=\frac{1}{\max(\Delta \phi+2)}\,\frac{1}{1+\|\nabla \phi\|_{C^0}}\,.
$$
Using \eqref{eq_nablaPhi=0}, we see that the following relation holds at $p_0$
\[
\mu^2 (\Delta \phi+2)^2\abs{\nabla \phi}^2=\abs{\nabla \Delta \phi}^2=\abs{\nabla(A+B)}^2=\abs{\nabla A}^2 +\abs{\nabla B}^2+ 2\nabla A\cdot \nabla B\geq 2\nabla A\cdot \nabla B\,,
\]
i.e., 
\begin{equation}
\label{eq_09}
2\nabla A\cdot \nabla B\leq \mu^2 (\Delta \phi+2)^2\abs{\nabla \phi}^2\,. 
\end{equation}
To produce an upper bound for $B\Delta A+A\Delta B-2\sum_{k=1}^{4} a_k \Delta a_k$ we consider $\eta_{ij}$ as in Lemma \ref{new_Luigi} and 
$$
\xi_{i}=\sqrt{\eta_{ii}}\,.
$$
Then at $p_0$ we have  
$$
\xi_i\xi_j\geq |\eta_{ij}|\,.
$$
Moreover, 
\begin{equation*}
\begin{split}
|a_1|(\xi_3\xi_5+\xi_2\xi_6)&\geq |a_1|
\Bigl\{\abs{\mu(\Delta \phi+2)(\phi_{35}+\mu \phi_3\phi_5)- \Delta \phi_{35}}+
\abs{\mu(\Delta \phi+2)(\phi_{26}+\mu \phi_2\phi_6)- \Delta \phi_{26}}\Bigr\}\\
&\geq a_1
\Bigl\{\mu(\Delta \phi+2)(\phi_{35}+\mu \phi_3\phi_5)- \Delta \phi_{35}-
\mu(\Delta \phi+2)(\phi_{26}+\mu \phi_2\phi_6)+ \Delta \phi_{26}\Bigr\}
\\ &=\mu(\Delta \phi+2)(a_1^2+a_1\mu (\phi_3\phi_5-\phi_2\phi_6) )-a_1\Delta a_1
\end{split}
\end{equation*}
at $p_0$, i.e.,
$$
|a_1|(\xi_3\xi_5+\xi_2\xi_6)\geq \mu(\Delta \phi+2)(a_1^2+\mu a_1 (\phi_3\phi_5-\phi_2\phi_6) )-a_1\Delta a_1
$$
at $p_0$. Similarly,
\begin{align*}
\abs{a_2}(\xi_4\xi_5+\xi_1\xi_6)&\geq \mu(\Delta \phi+2)(a_2^2+\mu a_2(\phi_4\phi_5-\phi_1\phi_6) )-a_2\Delta a_2\,,\\
\abs{a_3}(\xi_4\xi_6+\xi_1\xi_5)&\geq \mu(\Delta \phi+2)(a_3^2+\mu a_3(\phi_4\phi_6+\phi_1\phi_5) )-a_3\Delta a_3\,,\\
\abs{a_4}(\xi_3\xi_6+\xi_2\xi_5)&\geq \mu(\Delta \phi+2)(a_4^2+\mu a_4(\phi_3\phi_6+\phi_2\phi_5) )-a_4\Delta a_4\,,
\end{align*}
at $p_0$. If we add up the last four inequalities and use \eqref{eqn:109} with $\xi_k=\phi_k$ for $ k=1,\dots,4 $ and $ \xi_5=-\phi_5 $, $ \xi_6=-\phi_6 $, we end up with 
$$
\begin{aligned}
& 2|a_1|(\xi_3\xi_5+\xi_2\xi_6)+2\abs{a_2}(\xi_4\xi_5+\xi_1\xi_6)+2\abs{a_3}(\xi_4\xi_6+\xi_1\xi_5)+2\abs{a_4}(\xi_3\xi_6+\xi_2\xi_5)\geq\\
& \mu(\Delta \phi+2)\left(\sum_{k=1}^4(2a_k^2-\mu B\phi_{k}^2)-\mu A(\phi_5^2+\phi_6^2)\right)-2\sum_{k=1}^4a_k\Delta a_k
\end{aligned}
$$  
at $p_0$. 

To handle the last inequality we need the following estimate 
\begin{multline}\label{quella che manca}
B(\xi_1^2+\xi_2^2+\xi_3^2+\xi_4^2)+A(\xi_5^2+\xi_6^2)\geq \\
2|a_1|(\xi_3\xi_5+\xi_2\xi_6)+2\abs{a_2}(\xi_4\xi_5+\xi_1\xi_6)+2\abs{a_3}(\xi_4\xi_6+\xi_1\xi_5)+2\abs{a_4}(\xi_3\xi_6+\xi_2\xi_5).
\end{multline}
Notice this is stronger than \eqref{eqn:109}. 

In fact, if we assume  \eqref{quella che manca}, then 
$$
\begin{aligned}
& B\sum_{k=1}^4\xi_k^2+A(\xi_5^2+\xi_6^2)\geq \mu(\Delta \phi+2)\left(\sum_{k=1}^4(2a_k^2-\mu B\phi_{k}^2)-\mu A(\phi_5^2+\phi_6^2)\right)-2\sum_{k=1}^4a_k\Delta a_k
\end{aligned}
$$  
at $p_0$ and, keeping in mind the definition of $\xi_k$, 
\begin{multline*}
B\sum_{k=1}^4\xi_k^2+A(\xi_5^2+\xi_6^2)=
\mu(\Delta \phi +2)\left( A\sum_{k=5}^{6} (\phi_{kk}+\mu \phi_k^2)+B\sum_{k=1}^{4} (\phi_{kk}+\mu \phi_k^2) \right)
-A\Delta B-B\Delta A\,,
\end{multline*}
at $p_0$. 

Therefore
$$
\begin{aligned}
& \mu(\Delta \phi +2)\left( A\sum_{k=5}^{6} (\phi_{kk}+\mu \phi_k^2)+B\sum_{k=1}^{4} (\phi_{kk}+\mu \phi_k^2) \right)
-A\Delta B-B\Delta A \geq\\ 
& \mu(\Delta \phi+2)\left(\sum_{k=1}^4(2a_k^2-\mu B\phi_{k}^2)-\mu A(\phi_5^2+\phi_6^2)\right)-2\sum_{k=1}^4a_k\Delta a_k\,, 
\end{aligned}
$$
at $p_0$, which implies 
$$
\begin{aligned}
& A\Delta B+B\Delta A
-2\sum_{k=1}^4a_k\Delta a_k\le
\\&\quad
\le\mu(\Delta \phi +2)\left( A\sum_{k=5}^{6} (\phi_{kk}+2\mu \phi_k^2)+B\sum_{k=1}^{4} (\phi_{kk}+2\mu \phi_k^2) 
-2\sum_{k=1}^4 a_k^2\right)
\\&\quad
\le \mu(\Delta \phi +2)\left(2AB-(A+B)+2\mu (A+B)\abs{\nabla \phi}^2 
-2\sum_{k=1}^4 a_k^2\right)
\\&\quad
= \mu(\Delta \phi +2)\left( 2\mathrm e^F-(\Delta \phi+2)+2\mu (\Delta\phi+2)\abs{\nabla \phi}^2\right)\,,
\end{aligned}
$$
at $p_0$. In other terms, 
\begin{equation}
\label{eq_010}
A\Delta B+B\Delta A
-2\sum_{k=1}^4a_k\Delta a_k\le \mu(\Delta \phi +2)\left( 2\mathrm e^F-(\Delta \phi+2)+2\mu (\Delta\phi+2)\abs{\nabla \phi}^2\right)
\end{equation}
at $p_0$. 
From \eqref{eq_08}, \eqref{eq_09} and \eqref{eq_010} we finally deduce
\begin{equation*}
\mu(\Delta\phi+2)^2\le -\Delta \mathrm e^F+  2\mu(\Delta \phi +2)\mathrm e^F+3\mu^2(\Delta \phi+2)^2 \abs{\nabla \phi}^2,\,
\end{equation*}
at $p_0$. At this juncture the a priori $C^0$-estimate for $\Delta \phi$ can be obtained as we did in the second part of Section \ref{Laplacian}. 

Let us point out that requiring \eqref{quella che manca} for every $\xi\in \R^6$ is equivalent to \eqref{nuovanecessaria}. Indeed the quadratic form 
\begin{multline*}
Q(\xi)=B(\xi_1^2+\xi_2^2+\xi_3^2+\xi_4^2)+A(\xi_5^2+\xi_6^2)\\
-2|a_1|(\xi_3\xi_5+\xi_2\xi_6)-2\abs{a_2}(\xi_4\xi_5+\xi_1\xi_6)-2\abs{a_3}(\xi_4\xi_6+\xi_1\xi_5)-2\abs{a_4}(\xi_3\xi_6+\xi_2\xi_5)
\end{multline*}
has matrix
\begin{equation*}
\begin{pmatrix}
\phantom{-}B & \phantom{-}0 & \phantom{-}0 & \phantom{-}0 & -\abs{a_3} & -\abs{a_2} \\
\phantom{-}0 & \phantom{-}B & \phantom{-}0 & \phantom{-}0 & -\abs{a_4} & -\abs{a_1} \\
\phantom{-}0 & \phantom{-}0 & \phantom{-}B & \phantom{-}0 & -\abs{a_1} & -\abs{a_4} \\
\phantom{-}0 & \phantom{-}0 & \phantom{-}0 & \phantom{-}B & -\abs{a_2} & -\abs{a_3} \\
-\abs{a_3} & -\abs{a_4} & -\abs{a_1} & -\abs{a_2} & \phantom{-}A & \phantom{-}0 \\
-\abs{a_2} & -\abs{a_1} & -\abs{a_4} & -\abs{a_3} & \phantom{-}0 & \phantom{-}A
\end{pmatrix}\,,
\end{equation*}
which is positive definite if and only if  
$$
B^4\left( \left(A-B^{-1}\sum_{k=1}^{4}a_k^2\right)^2- 4B^{-2}\left(\abs{a_2a_3}+\abs{a_1a_4}\right)^2 \right)>0
$$
since $B>0$. A direct computation tells that the last condition is equivalent to  
$$
2(\abs{a_2a_3}+\abs{a_1a_4})<{\rm e}^F\,.
$$

\bigskip
In analogy to the above discussion, the manifold $M_1$ arises as an $S^1$-bundle over a $T^7$-torus, and the function $F$ may be chosen to be $S^1$-invariant. If so, the quaternionic Monge-Amp\`ere equation \eqref{eq_quaternionic_Calabi_conjecture} reads 
\begin{multline*}
(\phi_{11}+\phi_{22}+\phi_{33}+\phi_{44}+1)(\phi_{55}+\phi_{66}+\phi_{77}+1)\\
-(\phi_{45}-\phi_{16}-\phi_{27})^2-(\phi_{35}+\phi_{17}-\phi_{26})^2\\
-(\phi_{36}+\phi_{47}+\phi_{25})^2-(\phi_{46}-\phi_{37}+\phi_{15})^2=\mathrm{e}^F\,,
\end{multline*}
where $\varphi$ is an unknown function in $C^{\infty}(T^7)$. 

Setting 
\begin{align*}
A=\phi_{11}+\phi_{22}+\phi_{33}+\phi_{44}+1\,, && B=\phi_{55}+\phi_{66}+\phi_{77}+1
\end{align*}
and
\begin{align*}
a_1=\phi_{45}-\phi_{16}-\phi_{27}\,, && a_2=\phi_{35}+\phi_{17}-\phi_{26}\,,\\
a_3=\phi_{36}+\phi_{47}+\phi_{25}\,, && a_4=\phi_{46}-\phi_{37}+\phi_{15}\,,
\end{align*}
the equation turns into  
\begin{equation}\label{aeqn:207}
AB-\sum_{i=1}^4 a_i^2=\mathrm e^F\,.
\end{equation}
The above is elliptic and 
\begin{multline}\label{aeqn:209}
B(\xi_1^2+\xi_2^2+\xi_3^2+\xi_4^2)+A(\xi_5^2+\xi_6^2+\xi^2_7)
-2 a_1(\xi_4\xi_5-\xi_1\xi_6-\xi_2\xi_7)\\
-
2 a_2(\xi_3\xi_5+\xi_1\xi_7-\xi_2\xi_6)
-2 a_3(\xi_3\xi_6+\xi_4\xi_7+\xi_2\xi_5)-2 a_4 (\xi_4\xi_6-\xi_3\xi_7+\xi_1\xi_5)> 0\,,
\end{multline}
for every $\xi\in \R^7$, $\xi \neq 0$. 

We proceed as in the previous case, and  choose $p_0$ and  $\eta_{ij}$ as in Lemma \ref{new_Luigi} and 
$$
\mu=\frac{1}{\max(\Delta \phi+2)}\,\frac{1}{1+\|\nabla \phi\|_{C^0}}\,,
$$
resulting in 
\begin{equation*}
\Delta \mathrm{e}^F\leq B\Delta A+A\Delta B+ \mu^2 (\Delta \phi+2)^2\abs{\nabla \phi}^2-2\sum_{k=1}^{4} a_k \Delta a_k\,, \quad \mbox{ at }p_0\,.
\end{equation*}

Set $\xi_i=\sqrt{\eta_{ii}}$  and apply Lemma \ref{new_Luigi} to obtain 
\begin{equation*}
\begin{split}
|a_1|&(\xi_4\xi_5+\xi_1\xi_6+\xi_2\xi_7)\\
\geq& |a_1|
\Bigl\{\abs{\mu(\Delta \phi+2)(\phi_{45}+\mu \phi_4\phi_5)- \Delta \phi_{45}}+
\abs{\mu(\Delta \phi+2)(\phi_{16}+\mu \phi_1\phi_6)- \Delta \phi_{16}}\\
&+\abs{\mu(\Delta \phi+2)(\phi_{27}+\mu \phi_2\phi_7)- \Delta \phi_{27}}\Bigr\}\\
\geq& a_1
\Bigl\{\mu(\Delta \phi+2)(\phi_{45}+\mu \phi_4\phi_5)- \Delta \phi_{45}-
\mu(\Delta \phi+2)(\phi_{16}+\mu \phi_1\phi_6)+ \Delta \phi_{16}\\
&-\mu(\Delta \phi+2)(\phi_{27}+\mu \phi_2\phi_7)+ \Delta \phi_{27}\Bigr\}
\\ =&\mu(\Delta \phi+2)(a_1^2+a_1\mu (\phi_4\phi_5-\phi_1\phi_6-\phi_2\phi_7) )-a_1\Delta a_1
\end{split}
\end{equation*}
at $p_0$, i.e.
$$
|a_1|(\xi_4\xi_5+\xi_1\xi_6+\xi_2\xi_7)\geq \mu(\Delta \phi+2)(a_1^2+\mu a_1 (\phi_4\phi_5-\phi_1\phi_6-\phi_2\phi_7) )-a_1\Delta a_1
$$
at $p_0$. From that we deduce
\begin{align*}
\abs{a_2}(\xi_3\xi_5+\xi_1\xi_7+\xi_2\xi_6)&\geq \mu(\Delta \phi+2)(a_2^2+\mu a_2(\phi_3\phi_5+\phi_1\phi_7-\phi_2\phi_6) )-a_2\Delta a_2\,,\\
\abs{a_3}(\xi_3\xi_6+\xi_4\xi_7+\xi_2\xi_5)&\geq \mu(\Delta \phi+2)(a_3^2+\mu a_3(\phi_3\phi_6+\phi_4\phi_7+\phi_2\phi_5) )-a_3\Delta a_3\,,\\
\abs{a_4}(\xi_4\xi_6+\xi_3\xi_7+\xi_1\xi_5)&\geq \mu(\Delta \phi+2)(a_4^2+\mu a_4(\phi_4\phi_6-\phi_3\phi_7+\phi_1\phi_5) )-a_4\Delta a_4\,,
\end{align*}
at $p_0$. The sum of the previous four inequalities, together with \eqref{aeqn:209},  yields
$$
\begin{aligned}
& 2|a_1|(\xi_4\xi_5+\xi_1\xi_6+\xi_2\xi_7)+
2\abs{a_2}(\xi_3\xi_5+\xi_1\xi_7+\xi_2\xi_6)\\
&+
2\abs{a_3}(\xi_3\xi_6+\xi_4\xi_7+\xi_2\xi_5)+
2\abs{a_4}(\xi_4\xi_6+\xi_3\xi_7+\xi_1\xi_5)\geq\\
& \mu(\Delta \phi+2)\left(\sum_{k=1}^4(2a_k^2-\mu B\phi_{k}^2)-\mu A\sum_{k=5}^7\phi_k^2\right)-2\sum_{k=1}^4a_k\Delta a_k
\end{aligned}
$$  
at $p_0$. 

We need the following estimate 
\begin{multline}\label{quella che manca2}
B(\xi_1^2+\xi_2^2+\xi_3^2+\xi_4^2)+A(\xi_5^2+\xi_6^2+\xi^2_7)
-2 |a_1|(\xi_4\xi_5+\xi_1\xi_6+\xi_2\xi_7)\\
-2 |a_2|(\xi_3\xi_5+\xi_1\xi_7+\xi_2\xi_6)
-2 |a_3|(\xi_3\xi_6+\xi_4\xi_7+\xi_2\xi_5)-2 |a_4| (\xi_4\xi_6+\xi_3\xi_7+\xi_1\xi_5)> 0\,,
\end{multline}
at $p_0$, which is stronger than \eqref{aeqn:209}. Once this has been established, the result follows. 

To prove \eqref{quella che manca2} one has to show that the quadratic form 
\begin{multline*}
Q(\xi)=B(\xi_1^2+\xi_2^2+\xi_3^2+\xi_4^2)+A(\xi_5^2+\xi_6^2+\xi^2_7)
-2 |a_1|(\xi_4\xi_5+\xi_1\xi_6+\xi_2\xi_7)\\-
2 |a_2|(\xi_3\xi_5+\xi_1\xi_7+\xi_2\xi_6)-2 |a_3|(\xi_3\xi_6+\xi_4\xi_7+\xi_2\xi_5)-2 |a_4| (\xi_4\xi_6+\xi_3\xi_7+\xi_1\xi_5)
\end{multline*}
on $\R^7$ is positive-definite. This is equivalent to demanding two things:
$$
\mathrm{e}^{2F}-4(\abs{a_2a_3}+\abs{a_1a_4})^2>0\,,
$$

\[
\begin{split}
&\mathrm{e}^{3F}-4\mathrm{e}^F\left( \left(\abs{a_2a_3}+\abs{a_1a_4}\right)^2+\left(\abs{a_1a_3}+\abs{a_2a_4}\right)^2 +\left(\abs{a_1a_2}+\abs{a_3a_4}\right)^2 \right) \\
&-16\left(\abs{a_2a_3}+\abs{a_1a_4}\right)\left(\abs{a_1a_3}+\abs{a_2a_4}\right)\left(\abs{a_1a_2}+\abs{a_3a_4}\right)>0\,.
\end{split}
\]

\bigskip 
We wrap up this overview of our future plans by observing that there exist torus fibrations whose hypercomplex structure is not locally trivial. On these spaces Alesker's Theorem cannot be applied, so once the $C^{0}$-estimate of the Laplacian is at hand one needs to prove the $C^{2,\alpha}$-estimate by alternative arguments.     

We expect that the study of the equation on these explicit examples will give new insight for the handling of the general case.

\end{document}